\providecommand{\U}[1]{\protect \rule{.1in}{.1in}}
\newtheorem{theorem}{Theorem}
\newtheorem{corollary}[theorem]{Corollary}
\newtheorem{lemma}[theorem]{Lemma}
\newtheorem{proposition}[theorem]{Proposition}
\newtheorem{remark}[theorem]{Remark}
\newenvironment{proof}[1][Proof]{\textbf{#1.} }{\hfill  \rule{0.5em}{0.5em}}
\begin{document}
 
\title{Convexity of asymptotic geodesics in Hilbert Geometry}
\author{Charalampos Charitos, Ioannis Papadoperakis
\and and Georgios Tsapogas\\Agricultural University of Athens}
\maketitle

\begin{abstract}
If $\Omega$ is the interior of a convex polygon in $\mathbb{R}^{2}$ and $f,g$
two asymptotic geodesics, we show that the distance function $d\left(
f\left(  t\right)  ,g\left(  t\right)  \right)  $ is convex for $t$
sufficiently large. The same result is obtained in the case $\partial \Omega$
is of class $C^{2}$ and the curvature of $\partial \Omega$ at the point
$f\left(  \infty \right)  =g\left(  \infty \right)  $ does not vanish. An 
example is provided for the necessity of the curvature assumption. 
\newline \textit{{2010 Mathematics Subject Classification:} 52A41, 53C60,
51F99, 53A40.}

\end{abstract}

\section{Introduction and statements of results}

Let $\Omega$ be a bounded convex (open) domain in $\mathbb{R}^{n}$ and $h$ the
Hilbert metric on $\Omega$ defined as follows: for any distinct points $p,q$
in $\Omega$ let $p^{\prime}$ and $q^{\prime}$ be the intersections of the line
through $p$ and $q$ with $\partial \Omega$ closest to $p$ and $q$ respectively.
Then
\[
h\left(  p,q\right)  =\log \frac{\left \vert p^{\prime}-q\right \vert
\cdot \left \vert q^{\prime}-p\right \vert }{\left \vert p^{\prime}-p\right \vert
\cdot \left \vert q^{\prime}-q\right \vert }%
\]
where $\left \vert z-w\right \vert $ denotes the usual Euclidean distance. The
quantity $\frac{\left \vert p^{\prime}-q\right \vert \cdot \left \vert q^{\prime
}-p\right \vert }{\left \vert p^{\prime}-p\right \vert \cdot \left \vert q^{\prime
}-q\right \vert }$ is the cross ratio of the colinear points $p,q,q^{\prime
},p^{\prime}$ denoted by $\left[  p,q,q^{\prime},p^{\prime}\right]  $ and is
invariant under projective transformations of $\mathbb{R}^{n} .$ We refer to \cite{Bus}, \cite{Har}
and \cite{PaTr} for the basic properties of the distance $h$ as well as a
presentation of classic and contemporary aspects of Hilbert Geometry.

It is well known that, contrary to non-positively curved Riemannian geometry,
the distance between two points moving at unit speed along two geodesics is
not necessarily convex. The behavior near infinity of the distance function
\[
t\rightarrow h\left(  f\left(  t\right)  ,g\left(  t\right)  \right)
\]
when $f,g$ are two intersecting geodesics is studied in detail in \cite{Soc}.
In this note we are concerned with the case of asymptotic geodesics.

Each geodesic line $f$ determines two points at infinity denoted by $f\left(
-\infty \right)  $ and $f\left(  +\infty \right)  $ which are distinct points in
$\partial \Omega.$ Two geodesics $f,g$ are said to be \emph{asymptotic} if
$f\left(  +\infty \right)  =g\left(  +\infty \right)  .$ We first show that, up
to re-parametrization, the distance function $h\left(  f(t),g(t)\right)  $
tends to $0$ when $t\rightarrow \infty,$ provided that $\partial \Omega$ is
$C^1$ at the point $f\left(  +\infty \right)  =g\left(  +\infty \right)  .$
Then we show that, near infinity, the distance function is convex when
$\Omega$ is the interior of a convex polytope in $\mathbb{R}^{n}$ as well as
when $\Omega$ is a convex domain in $\mathbb{R}^{n}$ with $C^{2}$ boundary
such that the curvature of $\partial \Omega$ at $f(+\infty
)=g(+\infty)$ along the plane determined by $f$ and $g$ is not zero. The
precise statement is the following

\begin{theorem}
\label{mainth}Suppose $f,g$ are two asymptotic geodesic lines in a convex bounded
domain $\Omega$ with common boundary point $\xi=f(+\infty)=g(+\infty
)\in \partial \Omega$ and $P$ the plane determined by $f$ and $g.$ If $\Omega$
is either
\begin{itemize}
 \item[(a)] the interior of a convex polytope in $\mathbb{R}^{n},$ or
 \item[(b)] a convex domain in $\mathbb{R}^{n}$ with $C^{2}$ boundary and the
 curvature of $\partial \Omega$ at $\xi$ along $P$ is not zero,
\end{itemize}
then, there exists $T>0$ such that the function $t\rightarrow h\left(
f\left(  t\right)  ,g\left(  t\right)  \right)  $ is convex for $t>T.$
\end{theorem}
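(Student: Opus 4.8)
The plan is to reduce the assertion to a two-dimensional, and in case (a) essentially explicit, computation of the cross-ratio defining $h(f(t),g(t))$, in three stages. \emph{Stage 1: reduction to $n=2$.} Since $f,g$ lie in $P$ and, for any $p,q\in P\cap\Omega$, the line through $p,q$ (which lies in $P$) meets $\partial\Omega$ at exactly the two points where it meets the relative boundary of $P\cap\Omega$ in $P$ (that relative boundary being $\partial\Omega\cap P$), the Hilbert metric of $\Omega$ restricted to $P\cap\Omega$ coincides with the Hilbert metric of the planar convex set $P\cap\Omega$; and the hypotheses descend to the slice, which is the interior of a convex polygon in case (a) and a $C^{2}$ domain whose boundary curvature at $\xi$ is, by hypothesis, nonzero in case (b). Hence one may assume $\Omega\subset\mathbb{R}^{2}$ and that $f,g$ are segments of projective lines with $f(+\infty)=g(+\infty)=\xi$.

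\emph{Stage 2: projective normalization and set-up.} Using the projective invariance of $h$, one normalizes $\Omega$ near $\xi$: in case (b) send $\xi$ to the origin, its tangent line to the $x$-axis, and rescale so that $\partial\Omega$ osculates the unit circle at $\xi$ with error $O(x^{3})$ --- recalling that the interior of any conic, with its Hilbert metric, is isometric to $\mathbb{H}^{2}$ (rescaled by a factor $2$ in the present convention), so that near $\xi$ the domain $\Omega$ is a controlled perturbation of a model of constant negative curvature; in case (a) put the edge(s) of $\partial\Omega$ through $\xi$ in standard position. One then records the elementary asymptotics of a unit-speed parametrization, $|f(t)-\xi|\asymp e^{-t}$ and $|g(t)-\xi|\asymp e^{-t}$ (so, since convexity of $\phi(t):=h(f(t),g(t))$ is unaffected by $t\mapsto t+\mathrm{const}$, the origins of $f$ and $g$ may be chosen freely), and notes that the chord $\ell_{t}$ through $f(t),g(t)$ converges to a line through $\xi$; write $q'_{t}$ for the point of $\ell_{t}\cap\partial\Omega$ near $\xi$ and $p'_{t}$ for the other one, so that $h(f(t),g(t))=\log[f(t),g(t),q'_{t},p'_{t}]$. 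Generically $p'_{t}\to p'_{\infty}\neq\xi$ while $q'_{t}\to\xi$ at rate $e^{-t}$; in the ``synchronized'' situation the limit chord is tangent to $\partial\Omega$ at $\xi$ and the relevant scale becomes $e^{-t/2}$.

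\emph{Stage 3: expansion of the cross-ratio.} Because $p'_{t}\to p'_{\infty}$ while $f(t),g(t)\to\xi$ at rate $e^{-t}$, the factor $|p'_{t}-f(t)|/|p'_{t}-g(t)|$ equals $1+O(e^{-t})$; the remaining factor $|q'_{t}-g(t)|/|q'_{t}-f(t)|$, a ratio of three points concentrating near $\xi$, is treated by rescaling (by $e^{t}$, resp.\ by $e^{t/2}$). The outcome should be an expansion
\[
\phi(t)=c_{0}+c_{1}\,\rho(t)+o(\rho(t)),\qquad \rho(t)\in\{e^{-t},\,e^{-t/2}\},
\]
with $c_{0}\ge 0$ and, crucially, $c_{1}>0$, together with the corresponding expansions of $\phi'$ and $\phi''$ obtained by differentiating; then $\phi''(t)$ equals a positive multiple of $\rho(t)$ plus $o(\rho(t))$, hence is positive for $t$ large, which is the assertion. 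In case (b) the positivity $c_{1}>0$ is transparent in the hyperbolic model, where $\cosh\phi(t)=\cosh c_{0}+c_{1}'e^{-t}+\cdots$ with $c_{1}'>0$ and $\operatorname{arccosh}$ is increasing; in case (a), where $\partial\Omega$ is exactly linear near $\xi$ and near $p'_{\infty}$, $\phi(t)$ is for $t$ large an explicit elementary function of $e^{-t}$ and $c_{1}>0$ reduces to a finite verification using the convexity of $\Omega$ at $p'_{\infty}$.

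The main obstacle will be Stage 3 in case (b): one must show that the $O(x^{3})$ deviation of $\partial\Omega$ from its osculating conic, which is felt on the scale $x\asymp e^{-t/2}$, alters the cross-ratio and its first two $t$-derivatives only by a relatively $o(1)$ amount, so that the sign of $\phi''$ is dictated by the hyperbolic model, and one must check that $c_{1}$ is strictly, not merely weakly, positive. The cleanest route is to write $\phi(t)$ explicitly from the cross-ratio in the normal coordinates, differentiate twice, and isolate the leading term. The curvature hypothesis is genuinely used here: a $C^{2}$ boundary that is flat at $\xi$ is flatter near $\xi$ than every conic, and --- as the example at the end of the paper shows --- this can destroy the convexity of $\phi$, so no softer argument avoiding this estimate can work.
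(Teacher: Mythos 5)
Your Stages 1 and 2 match the paper's setup: the reduction to the planar slice $P\cap\Omega$ is exactly how the paper begins, and the projective normalization of $\xi$, $f(\pm\infty)$, $g(\pm\infty)$, $f(0)$, $g(0)$ is the paper's first move in both parts (the paper also needs, and proves, a small lemma that projective maps preserve non-vanishing of curvature, which your normalization implicitly uses). The gap is in Stage 3. You derive (or rather, posit) an asymptotic expansion of the \emph{value} $\phi(t)=c_0+c_1\rho(t)+o(\rho(t))$ and then assert that ``the corresponding expansions of $\phi'$ and $\phi''$ are obtained by differentiating.'' Term-by-term differentiation of an asymptotic expansion is not legitimate: an error that is $o(\rho(t))$ in $\phi$ can have second derivative much larger than $\rho(t)$, and controlling $\phi''$ is the entire content of the theorem. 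You flag this yourself as ``the main obstacle,'' but the proposal does not resolve it; and your quantitative handle on it is too optimistic, since a $C^2$ boundary deviates from its osculating conic only by $o(x^2)$, not $O(x^3)$ --- the $O(x^3)$ bound would require a $C^3$ (or at least Dini-type) hypothesis that the theorem does not make. Consequently the ``controlled perturbation of the hyperbolic model'' cannot be closed at the level of function values: convexity of the distance in the limiting conic model does not transfer to $\Omega$ without direct control of two $t$-derivatives of the boundary parametrization.

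What the paper does instead, and what you would need to do, is compute $\phi''$ \emph{exactly} in terms of $x(t)$, $y(t)$, $\frac{dy}{dx}$ and $\frac{d^2y}{dx^2}$ of the boundary graph (via the identity $\bigl(\frac{y}{x}\bigr)''x^2\frac{dy}{dx}=\bigl[\bigl(\frac{dy}{dx}\bigr)^{-2}\frac{d^2y}{dx^2}y-2+2\frac{y}{x}\frac{dx}{dy}\bigr](y')^2+\bigl(x\frac{dy}{dx}-y\bigr)y''$), multiply by the correct power of $e^{t}$ for each of the three boundary configurations, and evaluate the limit of each term by l'H\^opital-type arguments; the nonzero curvature enters precisely to cancel $\frac{d^2y}{dx^2}$ factors in those limits, and only $C^2$ regularity is consumed. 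This yields a strictly positive limit for (a normalization of) $\phi''$, which is the statement you need. Your case (a) is closer to complete, since there the boundary really is piecewise linear near the relevant points and $\phi(t)$ is an explicit elementary function of $e^{-t}$ (the paper's Lemma \ref{convy} is exactly the ``finite verification'' you allude to), but for case (b) the proposal as written is a plausible plan with its decisive analytic step missing.
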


In the last Section and for the case of non-asymptotic geodesics, two simple examples are provided demonstrating non-convexity for either intersecting or, disjoint geodesics. Finally,
an example demonstrating the necessity of the curvature condition in Theorem \ref{mainth}b
above is provided (see Example 3).

Dynamical properties of the geodesic flow on $\Omega /\Gamma$ equipped with the Hilbert metric, where $\Gamma$ is a torsion free discrete group which divides the strictly   convex domain $\Omega$, have been studied by Y. Benoist (see \cite{Ben}) using the Anosov properties of the flow.
In view of Eberlein's approach in the study of the geodesic flow (see \cite{Ebe1}, \cite{Ebe2} and \cite{CPT}) which is based on the convexity of the distance function  as well as the zero distance of asymptotic geodesics, mixing of geodesic flow
in the Hilbert geometry setting can be established using Theorem \ref{mainth}b and Proposition \ref{dzero} below.

\section{Distance of asymptotic geodesics}

We will always work with a pair of geodesic lines which determine a plane $P$
in $\mathbb{R}^{n}.$ As the distance function only depends on the affine
section $P\cap \Omega$ we will assume for the rest of this paper that $\Omega$
is a bounded convex domain in $\mathbb{R}^{2}.$

Recall that the Euclidean line $\ell_{\xi}$ is called a \emph{support line}
for $\Omega$ at the point $\xi \in \partial \Omega$ if $\partial \Omega \cap
\ell_{\xi}\ni \xi$ and $\Omega \cap \ell_{\xi}=\varnothing.$ Note that if
$\partial \Omega$ is smooth at $\xi,$ then the support line is the unique
tangent line at $\xi.$

\begin{figure}[ptb]
\begin{center}
\includegraphics
[scale=0.8]
{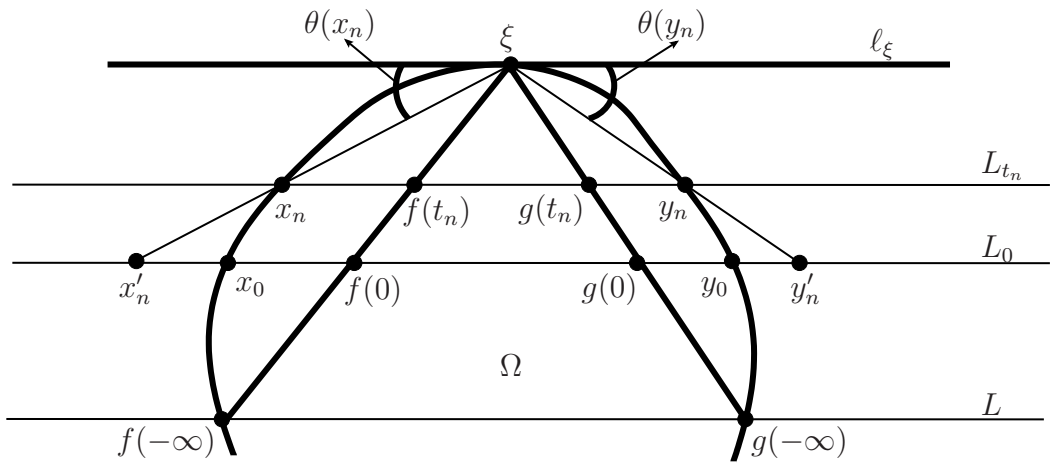}
\end{center}
\par
\begin{picture}(22,12)
\put(247,195){$\theta(y_n)$}
\put(121,195){$\theta(x_n)$}
\put(335,188){$\ell_{\xi}$}
\put(195,190){$\xi$}
\put(195,65){$\Omega$}
\put(290,37){$g(-\infty)$}
\put(50,37){$f(-\infty)$}
\put(377,50){$L$}
\put(377,110){$L_0$}
\put(377,142){$L_{t_n}$}
\put(51,95){$x^{\prime}_n$}
\put(136,94){$f(0)$}
\put(226,95){$g(0)$}
\put(110,125){$x_n$}\put(255,126){$y_n$}
\put(271,97){$y_0$}\put(95,97){$x_0$}
\put(305,95){$y^{\prime}_n$}
\put(202,125){$g(t_n)$}
\put(158,124){$f(t_n)$}
\end{picture}
\caption{Overview of notation for the proof of Proposition \ref{dzero}.}
\label{quardangle}
\end{figure}

\begin{proposition}
\label{dzero}Let $f,g$ be two asymptotic geodesic lines with
common boundary point $\xi=f(+\infty)=g(+\infty)\in \partial \Omega.$ Assume
that $\partial \Omega$ is $C^1$ at $\xi.$ Then there exists a (geodesic)
re-parametrization of $f$ such that
\[
\lim_{t\rightarrow \infty}h\left(  f(t),g(t)\right)  =0.
\]

\end{proposition}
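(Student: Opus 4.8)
The plan is to work in the plane $P = \mathbb{R}^2$ and use the support line $\ell_\xi$ at $\xi$ as a reference direction. Since $\partial\Omega$ is $C^1$ at $\xi$, the support line is unique and tangent. First I would choose an affine coordinate system in which $\ell_\xi$ is horizontal, $\Omega$ lies below it, and $\xi$ is "at the top." For a parameter $t$, let $L_t$ be the horizontal line (parallel to $\ell_\xi$) that meets $f$ at the point called $f(t)$; I reparametrize $g$ so that $g(t)$ is the point where $L_t$ meets $g$. (This is the reparametrization in the statement: match $f$ and $g$ by horizontal level sets $L_t$ approaching $\xi$.) Write $f(t) = x_t$, $g(t) = y_t$, and let $x_t', y_t'$ be the "far" intersections of the chords through $x_t$ (resp. $y_t$) in the $f$-direction (resp. $g$-direction) with $\partial\Omega$ — these are the endpoints entering the cross-ratio formula for $h$.

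The key step is to estimate $h(x_t, y_t)$ from above and show it tends to $0$. I would bound $h(x_t,y_t)$ by comparing with an auxiliary convex region or, more directly, by the following monotonicity trick: the Hilbert distance between $x_t$ and $y_t$ inside $\Omega$ is at most the Hilbert distance between $x_t$ and $y_t$ inside a strip. More concretely, fix a small $\varepsilon>0$; because $\partial\Omega$ is $C^1$ at $\xi$, for $t$ close enough to the top the arc of $\partial\Omega$ near $\xi$ lies in a thin wedge of half-angle $\varepsilon$ about $\ell_\xi$. Using that $f$ and $g$ both converge to $\xi$, the four points $x_t', x_t, y_t, y_t'$ become "almost aligned along $\ell_\xi$-direction" while the chord lengths $|x_t' - x_t|$ and $|y_t' - y_t|$ stay bounded below (the chords exit through the far part of $\partial\Omega$, away from $\xi$), whereas $|x_t - y_t| \to 0$ since both points converge to $\xi$. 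Feeding this into
\[
h(x_t,y_t) = \log\frac{|x_t' - y_t|\cdot|y_t' - x_t|}{|x_t' - x_t|\cdot|y_t' - y_t|},
\]
and observing the numerator and denominator differ by a quantity controlled by $|x_t - y_t| \to 0$, the log tends to $\log 1 = 0$.

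The subtle point — and the main obstacle — is showing that the far intersection points $x_t', y_t'$ do not themselves drift toward $\xi$ and collapse the chord lengths, i.e. that $\liminf_t \min\{|x_t' - x_t|, |y_t' - y_t|\} > 0$. This requires knowing that $f$ and $g$ approach $\xi$ "nontangentially enough" from inside; here the $C^1$ hypothesis at $\xi$ is what guarantees the geodesics (which are straight Euclidean segments) cannot be tangent to $\ell_\xi$ at $\xi$ and hence enter $\xi$ transversally to the level lines $L_t$, so their backward ends $x_t', y_t'$ stay in a fixed compact part of $\partial\Omega$ bounded away from $\xi$. I would establish this by a compactness/continuity argument: the direction of the chord $[x_t', x_t]$ converges to the direction of $f$ at $\xi$, which is not horizontal, so the opposite endpoint converges to a definite point of $\partial\Omega \setminus \{\xi\}$; the same for $g$. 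Once this lower bound on chord lengths is in hand, the estimate above closes and yields $\lim_{t\to\infty} h(f(t),g(t)) = 0$.
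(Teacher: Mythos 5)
There is a genuine gap at the heart of your estimate. The cross-ratio formula for $h\left(f(t),g(t)\right)$ uses the two intersections with $\partial\Omega$ of the line through \emph{both} $f(t)$ and $g(t)$ --- in your coordinates this is the horizontal level line $L_t$ itself --- and when $\ell_\xi\cap\partial\Omega=\{\xi\}$ these two endpoints \emph{both converge to} $\xi$: any limit point of them lies in $\partial\Omega$ at height $0$, hence equals $\xi$. So the denominators $|x_t'-x_t|$, $|y_t'-y_t|$ tend to $0$ along with everything else, your claimed bound $\liminf_t\min\{|x_t'-x_t|,|y_t'-y_t|\}>0$ is false, and the estimate does not close. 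You have conflated these endpoints with the backward endpoints $f(-\infty),g(-\infty)$ of the geodesics (your phrase ``in the $f$-direction,'' and your later assertion that the chord $[x_t',x_t]$ has direction converging to that of $f$, make this explicit); those points do stay away from $\xi$, but they are not collinear with $f(t)$ and $g(t)$ and do not enter the cross ratio. The real content of the proposition is precisely this $0/0$ degeneration, and the $C^1$ hypothesis is needed for it, not for the transversality you invoke. The paper resolves it by projecting the four collapsing collinear points from the center $\xi$ onto the fixed line $L_0$ through $f(0),g(0)$ (central projection preserves the cross ratio), and then showing that, because the angles between $\ell_\xi$ and the segments $[\xi,x_t]$, $[\xi,y_t]$ tend to $0$ (this is where $C^1$ enters), the projected endpoints escape to infinity along $L_0$, whence the cross ratio tends to $1$.

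Two further points. First, your reparametrization matches $f$ and $g$ along lines parallel to $\ell_\xi$; for the result as stated you need a \emph{geodesic} reparametrization (a shift of the arc-length parameter), and level lines parallel to $\ell_\xi$ do not in general produce one. The paper first applies a projective map making $\ell_\xi$ parallel to the line $L$ through $f(-\infty)$ and $g(-\infty)$ and then uses lines parallel to $L$; the similar-triangles (equal cross ratio) argument then shows the new parametrization is by arc length. Second, the case where $\partial\Omega$ contains a segment of $\ell_\xi$ through $\xi$ requires separate treatment (the paper inscribes a comparison domain and uses monotonicity of the Hilbert metric); ironically, that is the one situation in which the chord endpoints do stay bounded away from $\xi$ and a direct estimate of your type would succeed.
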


\begin{proof}
Let $\ell_{\xi}$ be the tangent line at $\xi$ and $L$ the line containing
$f(-\infty)$ and $g(-\infty).$ We first treat the case where $\ell_{\xi}%
\cap \partial \Omega=\left \{  \xi \right \}  ,$ the other case being when
$\ell_{\xi}\cap \partial \Omega$ is an Euclidean segment containing $\xi.$

If $\ell_{\xi}\cap L$ is a (finite) point $A,$ we may compose with a
projective transformation which sends $A\ $to $\infty$ and, thus, we may
assume that $\ell_{\xi},L$ are parallel. For each $t\in \mathbb{R},$ using the
line $L_{t}$ containing $g(t)$ and parallel to $L$ we obtain a new
parametrization for $f$ by setting $f(t):=L_{t}\cap \mathrm{Im}f.$ For all
$t\in \mathbb{R},$ by similarity of the (Euclidean) triangles $\bigl(\xi
,f(t),g(t)\bigr)$ and $\bigl(\xi,f(0),g(0)\bigr),$ we have
\[
h(f(0),f(t))=h(g(0),g(t))=t
\]
hence $f$ is re-parametrized by arc length.

Pick an arbitrary sequence $\left \{  t_{n}\right \}  $ of positive reals
converging to infinity. For each $n\in \mathbb{N},$ the geodesic segment
$[f(t_{n}),g(t_{n})]$ determines two points in $\partial \Omega$ denoted by
$x_{n}$ and $y_{n}$ so that $\left[  x_{n},g\left(  t_{n}\right)  \right]  $
contains $f\left(  t_{n}\right)  $ and does not contain $y_{n}.$ For each
$n>0$ extend the (Euclidean) segment $[\xi,x_{n}]$ and denote by
$x_{n}^{\prime}$ its intersection with the line $L_{0}.$ All the above notation is displayed in Figure \ref{quardangle}.

Denote by $T_{n}$
(resp. $T_{n}^{\prime}$) the triangle with vertices $\xi,x_{n}$ and $y_{n}$
(resp. $x_{n}^{\prime}$ and $y_{n}^{\prime}$). 
Denote by $h_{T_{n}}$ and
$h_{T_{n}^{\prime}}$ the corresponding Hilbert metrics. We have
\[%
\begin{split}
h(f\left(  t_{n}\right)  ,g\left(  t_{n}\right)  )=h_{T_{n}}(f\left(
t_{n}\right)  ,  &  g\left(  t_{n}\right)  )=\log[f\left(  t_{n}\right)
,g\left(  t_{n}\right)  ,y_{n},x_{n}]\\
&  =\log[f\left(  0\right)  ,g\left(  0\right)  ,y_{n}^{\prime},x_{n}^{\prime
}]=d_{T_{n}^{\prime}}(f\left(  0\right)  ,g\left(  0\right)  ).
\end{split}
\]
Hence, it suffices to show that $h_{T_{n}^{\prime}}(f\left(  0\right)
,g\left(  0\right)  )\rightarrow0$ as $n\rightarrow \infty.$

As $\partial \Omega$ is smooth at $\xi$ and $\ell_{\xi}\cap \partial
\Omega=\left \{  \xi \right \}  $ the angle $\theta(x_{n})$ (resp. $\theta
(y_{n})$) formed by $\ell_{\xi}$ and the segment $[x_{n},\xi]$ (resp.
$[y_{n},\xi]$) is well defined. Clearly,
\[
\theta(x_{n})\rightarrow0\mathrm{\  \ and\  \ }\theta(y_{n})\rightarrow
0\mathrm{\  \ as\  \ }n\rightarrow \infty
\]
which implies that
\[
|x_{n}^{\prime}-f\left(  0\right)  |\rightarrow \infty \mathrm{\  \ and\  \ }%
|y_{n}^{\prime}-g\left(  0\right)  |\rightarrow \infty.
\]
Therefor, both fractions
\[%
\begin{split}
\frac{|x_{n}^{\prime}-g\left(  0\right)  |}{|x_{n}^{\prime}-f\left(  0\right)
|}=  &  \frac{|x_{n}^{\prime}-f\left(  0\right)  | + |f\left(  0\right)
-g\left(  0\right)  |}{|x_{n}^{\prime}-f\left(  0\right)  |}\mathrm{\ ,\ }\\
&  \frac{|y_{n}^{\prime}-f\left(  0\right)  |}{|y_{n}^{\prime}-g\left(
0\right)  |}=\frac{|y_{n}^{\prime}-g\left(  0\right)  |+|f\left(  0\right)
-g\left(  0\right)  |}{|y_{n}^{\prime}-g\left(  0\right)  |}%
\end{split}
\]
converge to $1$ and hence
\[
h_{T_{n}^{\prime}}(f\left(  0\right)  ,g\left(  0\right)  )=\log \left(
\frac{|x_{n}^{\prime}-g\left(  0\right)  |\, \,|y_{n}^{\prime}-f\left(
0\right)  }{|x_{n}^{\prime}-f\left(  0\right)  |\, \,|y_{n}^{\prime}-g\left(
0\right)  |}\right)  \rightarrow0.
\]
We now treat the case where $\xi$ is contained in a segment $\sigma
\subset \partial \Omega.$ We may assume that $\sigma$ and the line $L$
containing $f\left(  -\infty \right)  $ and $g\left(  -\infty \right)  $ are
parallel, otherwise, we may compose by a projective transformation sending the
intersection point at infinity. As in the previous case, define a new geodesic
parametrization of $f$ by setting $f(t):=L_{t}\cap \mathrm{Im}f$ where $L_{t}$
is the line containing $g(t)$ and parallel to $L.$ Pick a simple close $C^{1}$
curve $\tau$ with the following properties:

(1) $\tau$ bounds a convex domain $\Omega^{\prime}\subsetneq \Omega$

(2) $\tau$ contains $\xi,f\left(  -\infty \right)  ,g\left(  -\infty \right)  $

(3) the tangent line to $\tau$ at $\xi$ contains $\sigma.$\newline Denote by
$h^{\prime}$ the Hilbert distance in $\Omega^{\prime}.$ By the previous case,
\[
h^{\prime}(f\left(  t\right)  ,g\left(  t\right)  )\rightarrow0 \text{\ as\ }
t\rightarrow \infty.
\]
Since $\Omega^{\prime}\subset \Omega,$ we have
\[
h(f\left(  t\right)  ,g\left(  t\right)  )\leq h^{\prime}(f\left(
t\right)  ,g\left(  t\right)  )
\]
for all $t,$ which completes the proof of the proposition.\hfill
\end{proof}

\begin{remark}
If $\partial \Omega$ is not smooth at $\xi$ then the distance function is
bounded away from $0.$ To see this, pick two distinct support lines $\ell
_{1},\ell_{2}$ at $\xi.$ One of them, say $\ell_{1},$ intersects $L$ at a
point, say $A.$ Then, a fraction involving the sines of the angles formed by
the segments $\left[  A,\xi \right]  ,\left[  f\left(  -\infty \right)
,\xi \right]  $ and $\left[  g\left(  -\infty \right)  ,\xi \right]  $ at $\xi$
is a lower bound for the distance function.
\end{remark}

\begin{corollary}
Let $f,g$ be two asymptotic geodesic lines with common boundary point
$\xi=f(+\infty)=g(+\infty)\in \partial \Omega.$ Assume that $\partial \Omega$ is
$C^1$ at $\xi.$ Then 
\[ \lim_{t\rightarrow \infty}h\left(  f(t),g(t)\right)  =C\]
for some non-negative real $C.$
\end{corollary}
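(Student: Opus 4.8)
The plan is to reduce the statement to Proposition \ref{dzero}. The point is that the re-parametrization produced there differs from the original arc-length parametrization of $f$ only by a translation of the parameter, and a translation of the parameter perturbs the distance function by an amount that goes to $0$.

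Concretely, I would argue as follows. Apply Proposition \ref{dzero} to the pair $f,g$ (keeping $g$ as given) to obtain a geodesic re-parametrization $\tilde f$ of $f$ with $h(\tilde f(t),g(t))\to 0$ as $t\to\infty$. Both $\tilde f$ and the original $f$ are arc-length parametrizations of the same geodesic line, and both run toward $\xi$ as $t\to+\infty$: for the $\tilde f$ of Proposition \ref{dzero} one has $\tilde f(t)=L_t\cap\mathrm{Im}f$ with $g(t)\to\xi$, so $L_t$ tends to the support line $\ell_\xi$ and hence $\tilde f(t)\to\xi$, matching the orientation of $f$. Consequently there is a constant $c\in\mathbb{R}$ with $\tilde f(t)=f(t+c)$, equivalently $f(t)=\tilde f(t-c)$, for all $t$; since $\tilde f$ is unit speed, $h\bigl(\tilde f(t-c),\tilde f(t)\bigr)=|c|$ for every $t$. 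Applying the (reverse) triangle inequality to the three points $f(t)=\tilde f(t-c)$, $\tilde f(t)$ and $g(t)$ gives
\[
\bigl|\,h(f(t),g(t))-|c|\,\bigr|\;\le\;h\bigl(\tilde f(t),g(t)\bigr),
\]
whose right-hand side tends to $0$ by Proposition \ref{dzero}. Hence $h(f(t),g(t))\to|c|$, and the Corollary holds with $C=|c|\ge 0$.

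There is essentially one thing to verify, namely that the re-parametrization of Proposition \ref{dzero} preserves orientation, so that it is indeed a translation $t\mapsto t+c$ of the parameter and not a reflection; this is the observation, recorded above, that $g(t)\to\xi$ forces $\tilde f(t)\to\xi$. (We use throughout, as is standard, that geodesic lines are parametrized by arc length; for parametrizations of $f$ and $g$ with different speeds the limit would be $+\infty$ rather than a finite constant.) Once this is granted, the displayed estimate is immediate from the metric axioms, so no further work is needed.
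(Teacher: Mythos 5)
Your proposal is correct and follows essentially the same route as the paper: both identify the re-parametrization of Proposition \ref{dzero} as a parameter shift $t\mapsto t+c$ and then apply the triangle inequality to conclude that $h(f(t),g(t))\to|c|$. Your explicit check that the re-parametrization preserves orientation (so that it really is a translation of the parameter) is a detail the paper leaves implicit, and is a worthwhile addition.
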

\begin{proof}
In the proof of Proposition \ref{dzero}, a new parametrization for $f$ was
defined. Denote by $\overline{f}$ the same geodesic line with the new
parametrization and let $C$ be the unique real number so that $f\left(
C\right)  =\overline{f}\left(  0\right)  .$ Clearly, $f\left(  t+C\right)  =$
$\overline{f}\left(  t\right)  ,$ hence,%
\[%
\begin{array}
[c]{lll}%
h\left(  f(t),g(t)\right)  & \leq & h\left(  f\left(  t\right)  ,f\left(
t+C\right)  \right)  +h\left(  f\left(  t+C\right)  ,g\left(  t\right)
\right) \\
& \leq & \left \vert C\right \vert +h\left(  \overline{f}\left(  t\right)
,g\left(  t\right)  \right)
\end{array}
\]
and
\[%
\begin{array}
[c]{lll}%
h\left(  f(t),g(t)\right)  & \geq & h\left(  f\left(  t\right)  ,f\left(
t+C\right)  \right)  -h\left(  f\left(  t+C\right)  ,g\left(  t\right)
\right) \\
& = & \left \vert C\right \vert -h\left(  \overline{f}\left(  t\right)
,g\left(  t\right)  \right)
\end{array}
\]
where $h\left(  \overline{f}\left(  t\right)  ,g\left(  t\right)  \right)
\rightarrow0$ as t$\rightarrow \infty.$ \hfill\end{proof}

\section{Convexity of the distance function}

We start with an elementary lemma concerning the Hilbert distance.

\begin{lemma}
\label{xift} Let $\Omega$ be a convex domain and $f$ a geodesic line in
$\Omega$ such that $\left \vert f\left(  -\infty \right)  -f\left(  0\right)
\right \vert =\left \vert f\left(  +\infty \right)  -f\left(  0\right)
\right \vert =1/2.$ Then for any $t>0$ the Euclidean distance $E\left(
t\right)  =\left \vert f\left(  t\right)  -f\left(  +\infty \right)  \right \vert
$ is given by the formula
\[
E\left(  t\right)  =\frac{1}{e^{t}+1}.
\]

\end{lemma}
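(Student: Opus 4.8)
The plan is to reduce everything to a one‑dimensional cross‑ratio computation along the chord $\mathrm{Im}f$. First I would fix an affine parametrization of the Euclidean line containing $\mathrm{Im}f$ by a real parameter $s$, normalized so that $f(-\infty)$ corresponds to $s=0$ and $f(+\infty)$ to $s=1$. The hypothesis $\left\vert f(-\infty)-f(0)\right\vert=\left\vert f(+\infty)-f(0)\right\vert=1/2$ then says exactly that $f(0)$ corresponds to $s=1/2$ and, since the chord has Euclidean length $1$, that the parameter $s$ coincides with Euclidean distance along the chord. Write $x=x(t)\in(0,1)$ for the parameter of $f(t)$.

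Next I would observe that for $t>0$ the point $f(t)$ lies on the sub‑segment $[f(0),f(+\infty)]$, i.e.\ $x\in(1/2,1)$: this is just the statement that the geodesic ray $t\mapsto f(t)$, $t\ge 0$, runs toward its forward endpoint $f(+\infty)$, so $x(t)$ increases from $x(0)=1/2$ toward $1$. Consequently, along the chord the two boundary points entering the Hilbert‑distance formula for the pair $f(0),f(t)$ are $f(-\infty)$ (parameter $0$, closest to $f(0)$) and $f(+\infty)$ (parameter $1$, closest to $f(t)$), occurring in the order $f(-\infty),f(0),f(t),f(+\infty)$.

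Then the definition of the Hilbert metric gives
\[
h\bigl(f(0),f(t)\bigr)=\log\frac{\left\vert 0-x\right\vert\cdot\left\vert 1-\tfrac{1}{2}\right\vert}{\left\vert 0-\tfrac{1}{2}\right\vert\cdot\left\vert 1-x\right\vert}=\log\frac{x}{1-x}.
\]
Since $f$ is a geodesic line it is parametrized by arc length, so $h(f(0),f(t))=t$; solving $e^{t}=x/(1-x)$ yields $1-x=1/(e^{t}+1)$. Because $s$ equals Euclidean distance on the chord, $E(t)=\left\vert f(t)-f(+\infty)\right\vert=1-x=1/(e^{t}+1)$, which is the claimed formula.

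There is essentially no serious obstacle here; the computation is routine once the normalization is set up. The only two points deserving a word of justification are that the affine parameter on the chord may be taken equal to Euclidean arc length (immediate, as the chord has length $1$) and the monotonicity of $x(t)$, i.e.\ that $f(t)$ approaches $f(+\infty)$ as $t$ increases, which is just the definition of $f(+\infty)$ as the forward endpoint of the geodesic line.
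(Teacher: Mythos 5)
Your argument is correct and is essentially the same as the paper's: both identify the two boundary points of the chord as $f(\pm\infty)$, plug the normalized Euclidean distances into the cross-ratio definition of $h$, use $h(f(0),f(t))=t$, and solve $e^{t}=(1-E(t))/E(t)$ for $E(t)$. The affine parameter $s$ you introduce is just a relabeling of the Euclidean distances the paper uses directly, so there is nothing further to add.
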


The proof is straightforward by the definition of the Hilbert metric:%
\[
h\left(  f\left(  0\right)  ,f\left(  t\right)  \right)  =\log \frac{\left \vert
f\left(  -\infty \right)  -f\left(  t\right)  \right \vert \cdot \left \vert
f\left(  +\infty \right)  -f\left(  0\right)  \right \vert }{\left \vert f\left(
-\infty \right)  -f\left(  0\right)  \right \vert \cdot \left \vert f\left(
+\infty \right)  -f\left(  t\right)  \right \vert }=\log \frac{\left(  1-E\left(
t\right)  \right)  \cdot \frac{1}{2}}{\frac{1}{2} \cdot E\left(  t\right)  }
\]
hence $\displaystyle
e^{t}=e^{h\left(  f\left(  0\right)  ,f\left(  t\right)  \right)  }=
\frac{1-E\left(  t\right)  }{E\left(  t\right)  }$ and $\displaystyle E\left(
t\right)  =\frac{1}{e^{t}+1}.$\newline

We also need an elementary calculus lemma.

\begin{lemma}
\label{convy} There exists $T>0$ such that the following function is convex
\[
\phi:\left[  T,+\infty \right)  \rightarrow \mathbb{R}:\phi \left(  t\right)
=\log \frac{\beta+\left(  \alpha+\frac{1}{2}\right)  E\left(  t\right)  }%
{\beta+\left(  \alpha-\frac{1}{2}\right)  E\left(  t\right)  }%
\]
where $\alpha,\beta$ are real numbers with $\beta>0$ and $E\left(  t\right)
=\frac{1}{e^{t}+1}.$
\end{lemma}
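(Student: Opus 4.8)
The plan is to turn the convexity question into a statement about the sign of $\phi''$ near $+\infty$ via the substitution $s=E(t)$. First I would record two elementary facts about $E(t)=(e^{t}+1)^{-1}$, both immediate from the given formula: $E$ is decreasing with $E(t)\to 0$ as $t\to\infty$ and $0<E(t)<1/2$ for $t>0$, and $E$ satisfies the identities $E'=-E(1-E)$ and $E''=E(1-E)(1-2E)$. In particular $E''>0$ on $(0,\infty)$, i.e. $E$ is itself convex there, and for $t$ large both $\beta+\bigl(\alpha+\frac12\bigr)E(t)$ and $\beta+\bigl(\alpha-\frac12\bigr)E(t)$ lie close to $\beta>0$; hence there is $T_{0}>0$ so that $\phi$ is defined and smooth on $[T_{0},\infty)$.

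Next, write $\phi(t)=F(E(t))$ with $F(s)=\log\bigl(\beta+(\alpha+\frac12)s\bigr)-\log\bigl(\beta+(\alpha-\frac12)s\bigr)$. Differentiating twice and substituting $E'=-E(1-E)$, a short computation gives
\[
\phi''(t)=E(t)\bigl(1-E(t)\bigr)\Bigl[\,F''\!\bigl(E(t)\bigr)\,E(t)\bigl(1-E(t)\bigr)+F'\!\bigl(E(t)\bigr)\bigl(1-2E(t)\bigr)\Bigr].
\]
Since $E(1-E)>0$ for $t>0$, the sign of $\phi''$ is governed by the bracketed factor $\Psi(t)$. I would then compute the boundary values $F'(0)=\frac{\alpha+\frac12}{\beta}-\frac{\alpha-\frac12}{\beta}=\frac1\beta$ and observe that $F''$ is continuous near $s=0$, hence bounded on a neighbourhood of $0$; therefore $F''(E(t))\,E(t)(1-E(t))\to 0$ while $F'(E(t))(1-2E(t))\to\frac1\beta$ as $t\to\infty$. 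Consequently $\Psi(t)\to\frac1\beta>0$, so there is $T\ge T_{0}$ with $\Psi(t)>0$, hence $\phi''(t)>0$, for all $t>T$; this yields convexity (indeed strict convexity) of $\phi$ on $[T,\infty)$.

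There is no genuinely hard step here — it is a routine one-variable computation — but the point that must be handled with care is the asymptotic bookkeeping: one has to see that the term $F'(E)(1-2E)$, whose limit $1/\beta$ is strictly positive and independent of $\alpha$, dominates the term $F''(E)\,E(1-E)=O(E)\to 0$ regardless of the sign of $\alpha$ and of the (possibly negative) value $F''(0)=-2\alpha/\beta^{2}$. Geometrically this is the statement that near infinity $E''=O(E)$ overwhelms $(E')^{2}=O(E^{2})$, so composing the convex function $E$ with $F$ cannot destroy convexity even when $F$ is concave.
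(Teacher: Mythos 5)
Your proof is correct and follows essentially the same route as the paper: a direct computation of $\phi''$ showing that the term coming from $E''$ (positive, of order $e^{-t}$) dominates the term coming from $(E')^{2}$ (of order $e^{-2t}$), so $\phi''>0$ for large $t$. The only cosmetic difference is that you factor $\phi=F\circ E$ and use the identities $E'=-E(1-E)$, $E''=E(1-E)(1-2E)$ to exhibit the clean limit $1/\beta$ of the bracketed factor, whereas the paper expands the numerator of $\phi''$ and identifies its dominant summand $\beta^{3}E''$.
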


\begin{figure}[ptb]
\begin{center}
\includegraphics
[scale=0.5]
{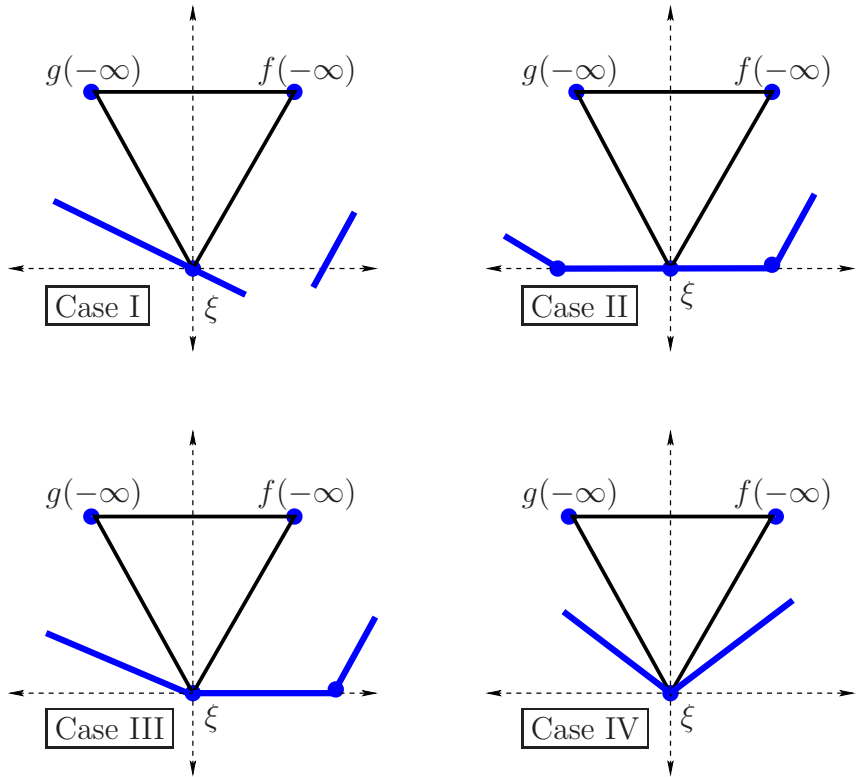}
\end{center}
\par
\begin{picture}(22,12)
\put(60,45){\fbox{Case III}}
\put(60,205){\fbox{Case I}}
\put(240,45){\fbox{Case IV}}
\put(240,205){\fbox{Case II}}
\put(120,50){$\xi$}
\put(120,205){$\xi$}
\put(300,50){$\xi$}
\put(300,210){$\xi$}
\put(60,135){$g(-\infty)$}
\put(60,295){$g(-\infty)$}
\put(240,135){$g(-\infty)$}
\put(240,295){$g(-\infty)$}
\put(140,135){$f(-\infty)$}
\put(140,295){$f(-\infty)$}
\put(320,135){$f(-\infty)$}
\put(320,295){$f(-\infty)$}
\end{picture}
\caption{The four generic positions of the sides of $\partial \Omega$ with
respect to $\xi$ and the coordinate axes (Theorem 1a).}\label{figcases}
\end{figure}

\begin{proof}
An elementary calculation shows that 
\[\displaystyle \phi^{\prime}\left(  t\right)=
\frac{\beta E^{\prime}\left(  t\right)}{\left[  \beta+\left(
\alpha+\frac{1}{2}\right)  E\left(  t\right)  \right] \left[
\beta+\left(  \alpha-\frac{1}{2}\right)  E\left(  t\right)  \right]}\]
and the second derivative of $\phi$ is
\[
\phi^{\prime \prime}\left(  t\right)  =\frac{\Phi}{\left[  \beta+\left(
\alpha+\frac{1}{2}\right)  E\left(  t\right)  \right]  ^{2}\left[
\beta+\left(  \alpha-\frac{1}{2}\right)  E\left(  t\right)  \right]  ^{2}}%
\]
where the numerator $\Phi$ is\\[3mm]
\hspace*{18mm} $\Phi=\beta \left[  \beta+\left(  \alpha+\frac{1}{2}\right)
E\left(  t\right)  \right]  \left[  \beta+\left(  \alpha-\frac{1}{2}\right)
E\left(  t\right)  \right]  E^{\prime \prime}\left(  t\right)  $ \\[2mm]%
\hspace*{43mm} $+\beta E^{\prime}\left(  t\right)  \left(  \alpha+\frac{1}%
{2}\right)  E^{\prime}\left(  t\right)  \left[  \beta+\left(  \alpha-\frac
{1}{2}\right)  E\left(  t\right)  \right]  $ \\[2mm]\hspace*{58mm} $-\beta
E^{\prime}\left(  t\right)  \left(  \alpha-\frac{1}{2}\right)  E^{\prime
}\left(  t\right)  \left[  \beta+\left(  \alpha+\frac{1}{2}\right)  E\left(
t\right)  \right]  .$\\[3mm]
As the asymptotic behavior of $E\left(  t\right)  ,E^{\prime}\left(  t\right)
$ and $E^{\prime \prime}\left(  t\right)  $ is $\frac{1}{e^{t}},\frac{-1}%
{e^{t}}$ and $\frac{1}{e^{t}}$ respectively, the dominant summand of $\Phi$
is
\[
\beta^{3}E^{\prime \prime}\left(  t\right)  =\beta^{3}\frac{e^{3t}-e^{t}%
}{\left(  e^{t}+1\right)  ^{4}}.
\]
Since $\beta>0,$ this completes the proof.
\end{proof}\\[3mm]
\textbf{Proof of Theorem \ref{mainth}(a).}

Let $\Omega$ be the interior of a convex polygon in $\mathbb{R}^{2}$ and $f,g$
two asymptotic geodesic lines with common boundary point $\xi=f(+\infty
)=g(+\infty)\in \partial \Omega.$ Recall that a projective transformation
preserves straight lines, convexity and cross ratio of four colinear points.
Moreover, a projective transformation is uniquely determined by its image on
four points provided that no three of them are colinear. As the latter
property is satisfied by the points $f(-\infty),$ $g(-\infty),$ $f(0)$ and
$g(0),$ we may assume, after composing by the appropriate projective
transformation that the coordinates of the four points mentioned above
are:\\[3mm]$f\left(  -\infty \right)  \equiv \left(  \frac{1}{2},\frac{\sqrt{3}%
}{2}\right)  ,g(-\infty)\equiv \left(  -\frac{1}{2},\frac{\sqrt{3}}{2}\right)
,f(0)\equiv \left(  \frac{1}{4},\frac{\sqrt{3}}{4}\right)  \text{\  \ and\  \ }%
g(0)\equiv \left(  -\frac{1}{4},\frac{\sqrt{3}}{4}\right)  .$\\[3mm]In
particular, the point $\xi=f(+\infty)=g(+\infty)$ is the point $\left(
0,0\right)  $ and the points $\xi,f\left(  -\infty \right)  ,g\left(
-\infty \right)  $ form an equilateral triangle with side length $1.$ There are four generic cases to examine depending on
whether the point $\xi \equiv \left(  0,0\right)  $ is a vertex of the polygon
$\partial \Omega$ or not and whether the side containing $\xi$ intersects the
$x-$axis only at $\xi$ or not. In Figure \ref{figcases} the thick segments represent sides of $\partial \Omega$ demonstrating the four cases. \begin{figure}[ptb]
\begin{center}
\includegraphics
[scale=1.2]
{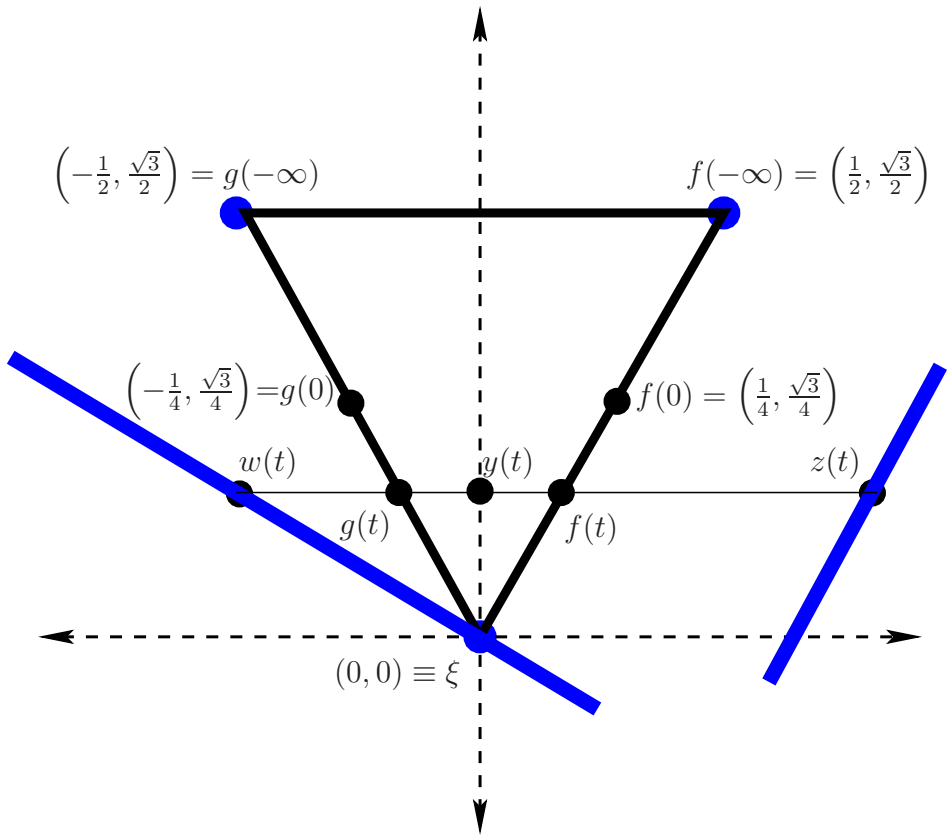}
\end{center}
\par
\begin{picture}(22,12)
\put(45,279){$\left(-\frac{1}{2},\frac{\sqrt{3}}{2}\right)=g(-\infty)$}
\put(285,279){$f(-\infty)=\left(\frac{1}{2},\frac{\sqrt{3}}{2}\right)$}
\put(72,197){$\left(-\frac{1}{4},\frac{\sqrt{3}}{4}\right)$\hspace*{-1mm} =$g(0)$}
\put(266,195){$f(0)=\left(\frac{1}{4},\frac{\sqrt{3}}{4}\right)$}
\put(116,170){$w(t)$}
\put(332,170){$z(t)$}
\put(208,170){$y(t)$}
\put(239,145){$f(t)$}
\put(154,146){$g(t)$}
\put(152,90){$(0,0)\equiv \xi$}
\end{picture}
\label{isosc}\caption{The calculation of the Hilbert distance $h\left(
f(t),g(t)\right)  $ (Theorem 1a)}\label{isosc1a}
\end{figure}

Clearly, in Case IV the distance $h\left(  f(t),g(t)\right)  $ is constant
because for all sufficiently large $t\neq t^{\prime}$ the lines containing
$f(t),g(t)$ and $f(t^{\prime}),g(t^{\prime})$ are parallel. We will deal in
detail with Case I and the arguments will suffice for the remaining cases II
and III.

By Lemma \ref{xift} we have%
\[
\left \vert f\left(  t\right)  -\xi \right \vert =\left \vert g\left(  t\right)
-\xi \right \vert =\frac{1}{e^{t}+1}\equiv E(t).
\]
Let $\left(  0,y\left(  t\right)  \right)  $ be the intersection point of the
line containing $f\left(  t\right)  ,g\left(  t\right)  $ with the $y-$axis.
As the triangle formed by $\xi,g\left(  t\right)  ,f\left(  t\right)  $ is
equilateral we have
\[
y\left(  t\right)  =E\left(  t\right)  \frac{\sqrt{3}}{2}=\frac{\sqrt{3}%
/2}{e^{t}+1}.
\]
Then the unique side of $\partial \Omega$ not containing $\xi$ and intersecting
the $x-$axis has the form%
\[
z\left(  t\right)  =\left(  \alpha y\left(  t\right)  +\beta,y\left(
t\right)  \right)
\]
and the side containing $\xi$ has the form%
\[
w\left(  t\right)  =\left(  - \alpha^{\prime}y\left(  t\right)  ,y\left(
t\right)  \right)
\]
for $\alpha \in \mathbb{R},$ $\beta>0$ and $\alpha^{\prime}>\frac{\sqrt{3}}{3}
.$ The latter holds because the angle formed by the segments $\left[
\xi,w\left(  t\right)  \right]  $ and $\left[  \xi,\left(  0,y\left(
t\right)  \right)  \right]  $ at $\xi$ belongs to $\left( \frac{2\pi}{3}
,\pi \right) .$ All the above notation is visualized in  Figure \ref{isosc1a}.

We next compute the Euclidean distances involved in the definition of the
distance $h\left(  f\left(  t\right)  ,g\left(  t\right)  \right)  :$%
\[%
\begin{array}
[c]{ccl}%
\left \vert z(t)-f\left(  t\right)  \right \vert  & = & \alpha y\left(
t\right)  +\beta-\frac{1}{2}\left \vert g\left(  t\right)  -f\left(  t\right)
\right \vert \, \,=\\[2mm]
& = & \alpha \frac{\sqrt{3}/2}{e^{t}+1}+\beta-\frac{1}{2}\frac{1}{e^{t}%
+1}=\left(  \frac{\alpha \sqrt{3}}{2}-\frac{1}{2}\right)  E\left(  t\right)
+\beta \\[2mm]%
\left \vert z(t)-g\left(  t\right)  \right \vert  & = & \left(  \frac
{\alpha \sqrt{3}}{2}+\frac{1}{2}\right)  E\left(  t\right)  +\beta \\[2mm]%
\left \vert w(t)-g\left(  t\right)  \right \vert  & = & \alpha^{\prime}y\left(
t\right)  -\frac{1}{2}\left \vert g\left(  t\right)  -f\left(  t\right)
\right \vert \, \, =\\[2mm]
& = & \alpha^{\prime}\frac{\sqrt{3}/2}{e^{t}+1}-\frac{1}{2}\frac{1}{e^{t}%
+1}=\left(  \frac{\alpha^{\prime}\sqrt{3}}{2}-\frac{1}{2}\right)  E\left(
t\right) \\[2mm]%
\left \vert w(t)-f\left(  t\right)  \right \vert  & = & \left(  \frac
{\alpha^{\prime}\sqrt{3}}{2}+\frac{1}{2}\right)  E\left(  t\right)
\end{array}
\]
It follows that
\[
h\left(  f\left(  t\right)  ,g\left(  t\right)  \right)  =\log \frac{\left(
\frac{\alpha \sqrt{3}}{2}+\frac{1}{2}\right)  E\left(  t\right)  +\beta
}{\left(  \frac{\alpha \sqrt{3}}{2}-\frac{1}{2}\right)  E\left(  t\right)
+\beta}+\log \biggl(A^{\prime}E\left(  t\right) \biggr)  \text{\  \ where\  \ }%
A^{\prime}=\frac{\frac{\alpha^{\prime}\sqrt{3}}{2}+\frac{1}{2}}{\frac
{\alpha^{\prime}\sqrt{3}}{2}-\frac{1}{2}}.
\]
The second summand is convex for all $t$ because $\alpha^{\prime} >
\frac{\sqrt{3}}{3},$ hence, $A^{\prime}>0.$ There exists, by Lemma
\ref{convy}, $T>0$ such that the first summand is convex for $t>T$ as
required. This completes the proof of part 1a. \\[2mm]
\textbf{Proof of Theorem \ref{mainth}(b).}  Let $\Omega$ be a
convex domain in $\mathbb{R}^{2}$ with $C^{2}$ boundary and  $f,g$  two asymptotic geodesic lines with common boundary point $\xi=f(+\infty)=g(+\infty)\in \partial \Omega.$ We will need the following well known Lemma whose proof is included for the reader's convenience.
\begin{lemma}
\label{curvat} Let $p$ be a projective transformation of $\mathbb{R}^{2} .$
sending $\partial \Omega$ to a bounded curve. If the curvature of $\partial \Omega$ at the point $\xi \in \partial \Omega$ is not zero then the same holds for the point $p(\xi) \in p \left( \partial (\Omega) \right) .$
\end{lemma}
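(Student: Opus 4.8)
The plan is to reduce the statement to a direct second-order computation in affine coordinates. First I would note that, since $p$ carries $\partial\Omega$ to a bounded curve, the point $\xi$ cannot lie on the line $\ell_{\infty}$ that $p$ sends to infinity; consequently $p$ is real-analytic (in particular $C^{2}$) and a local diffeomorphism on a neighbourhood of $\xi$. Curvature and, in particular, its non-vanishing are preserved by Euclidean motions, so I may pre-compose $p$ with a rotation and translation of the source and post-compose with a translation of the target and thereby assume: $\xi=(0,0)$; the tangent line of $\partial\Omega$ at $\xi$ is the $x$-axis, so that near $\xi$ the boundary is the graph $x\mapsto\bigl(x,y(x)\bigr)$ with $y(0)=y'(0)=0$ and $y''(0)=\kappa\neq 0$; and $p(\xi)=(0,0)$.

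Next I would write $p$ in normalized homogeneous form
\[
p(x,y)=\left(\frac{a_1x+b_1y}{a_3x+b_3y+1},\ \frac{a_2x+b_2y}{a_3x+b_3y+1}\right),
\]
where the numerators have no constant term precisely because $p(0,0)=(0,0)$, and the denominator is nonzero at the origin precisely because $\xi\notin\ell_{\infty}$. Since $p$ is a projective transformation it is a local diffeomorphism at $\xi$, which forces the Jacobian $\begin{pmatrix}a_1&b_1\\a_2&b_2\end{pmatrix}$ to be invertible; put $D:=a_1b_2-a_2b_1\neq 0$. Substituting $y=y(x)=\tfrac{\kappa}{2}x^{2}+o(x^{2})$ and expanding the quotient to order $x^{2}$, a short calculation gives, for the coordinate functions $u(x),v(x)$ of $x\mapsto p\bigl(x,y(x)\bigr)$,
\[
u(x)=a_1x+\bigl(\tfrac{\kappa}{2}b_1-a_1a_3\bigr)x^{2}+o(x^{2}),\qquad
v(x)=a_2x+\bigl(\tfrac{\kappa}{2}b_2-a_2a_3\bigr)x^{2}+o(x^{2}).
\]

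The key step is then the classical curvature formula: the curvature of the plane curve $x\mapsto\bigl(u(x),v(x)\bigr)$ at $x=0$ equals $\dfrac{u'(0)v''(0)-v'(0)u''(0)}{\bigl((u'(0))^{2}+(v'(0))^{2}\bigr)^{3/2}}$. From the expansions above $u'(0)=a_1$, $v'(0)=a_2$, $u''(0)=\kappa b_1-2a_1a_3$, $v''(0)=\kappa b_2-2a_2a_3$, so the numerator equals
\[
a_1(\kappa b_2-2a_2a_3)-a_2(\kappa b_1-2a_1a_3)=(a_1b_2-a_2b_1)\kappa=D\kappa\neq 0 ,
\]
while the denominator $(a_1^{2}+a_2^{2})^{3/2}$ is nonzero because $D\neq 0$. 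Hence the curvature of $p(\partial\Omega)$ at $p(\xi)$ is nonzero, as claimed. I do not anticipate a genuine obstacle: the only points requiring care are the legitimacy of the reduction to the normal form (harmless, as Euclidean motions preserve curvature) and the routine bookkeeping in the second-order Taylor expansion of the rational map $p$ restricted to the graph of $y(x)$.
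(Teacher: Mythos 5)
Your proof is correct, and it takes a genuinely different route from the paper's. The paper lifts $\partial\Omega$ to the cone over it in $\mathbb{R}^{3}$, writes the projective map as a linear map of $\mathbb{R}^{3}$ followed by radial projection back to the affine plane $\{z=1\}$, and then compares the curvatures of two planar sections of the same $C^{2}$ cone via a principal-curvature argument. You instead work entirely in the affine chart: after normalizing by Euclidean motions you write $p$ in fractional-linear form with vanishing constant terms in the numerators and constant term $1$ in the denominator (legitimate since boundedness of $p(\partial\Omega)$ keeps $\xi$ off the line sent to infinity), expand $p$ along the graph $y=\tfrac{\kappa}{2}x^{2}+o(x^{2})$ to second order, and apply the curvature formula for a regular parametrized curve. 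The bookkeeping checks out: the Jacobian at the origin is $\bigl(\begin{smallmatrix}a_1&b_1\\a_2&b_2\end{smallmatrix}\bigr)$, whose determinant $D$ is exactly the determinant of the normalized $3\times3$ matrix of $p$, hence nonzero; the cross terms $-2a_1a_2a_3$ cancel in $u'v''-v'u''$; and your argument yields the sharper quantitative statement that the new curvature equals $\kappa D/(a_1^{2}+a_2^{2})^{3/2}$ in the normalized coordinates (more generally, that curvature transforms by a nonzero multiplicative factor). What your approach buys is an elementary, fully explicit and self-contained two-variable computation; what the paper's approach buys is coordinate-free geometric insight (and it generalizes naturally to sections of convex hypersurfaces in higher dimensions), though as written its final step is considerably sketchier than your calculation. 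The only points worth stating a little more explicitly in a final write-up are (i) that the local graph representation over the tangent line exists by the implicit function theorem and that $y''(0)$ equals the curvature because the tangent is horizontal, and (ii) that the second-order Peano expansion of the $C^{2}$ function $x\mapsto p(x,y(x))$ does determine $u''(0)$ and $v''(0)$; both are routine.
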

\textbf{Proof of Lemma.}
Identify  $\mathbb{R}^{2} $ with the plane 
$\Pi =\left\{ (x,y,z)|z=1  \right\}$ in the real projective space 
$\mathbb{R}P^{2} =\mathbb{R}^{3}- \{ (0,0,0)\} / \sim $ whose points are rays emanating from the origin. The image of  $\partial \Omega$ under a projective transformation can be taken as the composition of
\begin{itemize}
 \item an invertible linear transformation of $\mathbb{R}^3,$ and
 \item the projection of $A\left(\partial \Omega\right)\subset A\left(\Pi\right)$ onto $\Pi$ along the rays through the origin.
\end{itemize}
Since an invertible linear transformation of $\mathbb{R}^3$ sends $C^2$ curves to $C^2$ curves and preserves the non-vanishing curvature property, it suffices to check the desired property for the projection $A\left(\Pi\right)\longrightarrow \Pi .$

To see this, let $E_1 , E_2$ be two hyperplanes  intersecting a $C^2$ 
cone $K$ through the origin and denote by $\sigma_i $ the simple closed convex $C^2$ curve determined by the intersection $E_i \cap K, i=1,2.$ Moreover, as $K$ is convex, the curvature of  $\sigma_i $ at any point is 
$\geq 0. $ Let $\ell$ be a line through the origin contained in $K$ intersecting $\sigma_i$ at the point $\xi _i , i=1,2$ and $E^{\prime}_2$ the hyperplane containg  
$\xi_2$ and parallel to $E_1.$ Denote by $\kappa (\xi _i)$  the curvature of $\sigma_i$ at $\xi_i$ and by $\kappa^{\prime} (\xi _2)$ the curvature of
the curve
$E^{\prime}_2 \cap K$ at $\xi_2 .$ Assume $\kappa (\xi _1)\neq 0$ and,  clearly,  $\kappa^{\prime} (\xi _2) \neq 0. $ For, if $\kappa (\xi _2)= 0$
 then, since the line $\ell$ contains $\xi_2 ,$ both principal curvature at 
 $\xi_2$ would be $0$ contradicting the fact that  $\kappa^{\prime} (\xi _2) \neq 0. $    \hfill \rule{0.5em}{0.5em} \\[2mm]
\begin{figure}[ptb]
\begin{center}
\includegraphics
[scale=0.5]
{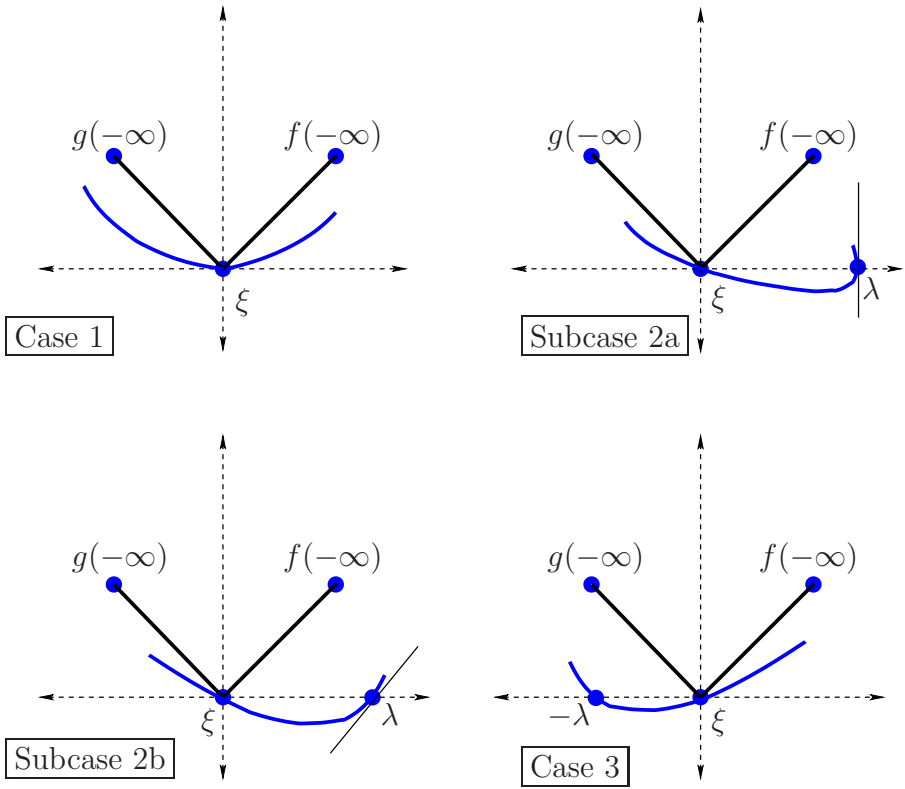}
\end{center}
\par
\begin{picture}(22,12)
\put(30,35){\fbox{Subcase 2b}}
\put(30,195){\fbox{Case 1}}
\put(225,32){\fbox{Case 3}}
\put(225,195){\fbox{Subcase 2a}}
\put(104,50){$\xi$}
\put(117,209){$\xi$}
\put(297,50){$\xi$}
\put(297,210){$\xi$}
\put(55,112){$g(-\infty)$}
\put(55,272){$g(-\infty)$}
\put(235,112){$g(-\infty)$}
\put(235,52){$-\lambda$}\put(172,52){$\lambda$}\put(354,213){$\lambda$}
\put(235,272){$g(-\infty)$}
\put(135,112){$f(-\infty)$}
\put(135,272){$f(-\infty)$}
\put(315,112){$f(-\infty)$}
\put(315,272){$f(-\infty)$}
\end{picture}
\caption{Cases of intersection of $\partial\Omega$ with the $x-$axis (Theorem 1b).}\label{fig12cases}
\end{figure}
Returning to the proof of Theorem \ref{mainth}(b),  the points $f\left(
-\infty \right)  ,g(-\infty),f(0)$\  \ and\  \ $g(0)$ form a non trivial
quadrilateral. After composing by a projective transformation we may assume
that the coordinates of the four points mentioned above are:
\begin{equation*}\begin{split}
f\left(-\infty\right)\equiv\left(\frac{\sqrt{2}}{2},\frac{\sqrt{2}}{2}\right),
g(-\infty)\equiv \left(  -\frac{\sqrt{2}}{2},\frac{\sqrt{2}}{2}\right) ,
& f(0) \equiv \left(  \frac{\sqrt{2}}{4},\frac{\sqrt{2}}{4}\right) 
\\ & \text{\  \ and\  \ }
g(0)\equiv \left(  -\frac{\sqrt{2}}{4},\frac{\sqrt{2}}{4}\right).
\end{split}\end{equation*}
In particular, the point $\xi=f(+\infty)=g(+\infty)$ is the point $\left(
0,0\right)  $ and the points $\xi,f\left(  -\infty \right)  ,g\left(
-\infty \right)  $ form a right equilateral triangle.
There are three cases to examine depending on the intersection of $\partial\Omega$ with the $x-$axis:

Case 1: $\partial\Omega \cap \left\{y=0\right\} = \left\{\xi\right\}.$

Case 2: $\partial\Omega \cap \left\{y=0\right\} = 
       \left\{\xi,\left(\lambda,0\right)\bigm\vert \lambda>0 \right\}.$ 
       
Case 3: $\partial\Omega \cap \left\{y=0\right\} = 
       \left\{\xi,\left(\lambda,0\right)\bigm\vert \lambda<0 \right\}.$  \\    
In Figure \ref{fig12cases}, these cases are demonstrated with the additional 
consideration, in Case 2, of two subcases (2a and 2b defined below) depending on the tangent line at the point $\left(\lambda,0\right) .$

       The Euclidean line containing $f(t)$ and $g(t)$ intersects the $y-$axis at the point $\left(0,y(t)\right)$ where,  by Lemma \ref{xift},
$\left| f(t) -\xi \right|=\frac{1}{e^{t}+1},$ hence, 
\begin{equation}y(t)=\frac{\sqrt{2}/2}{e^{t}+1}.\label{ytlength}\end{equation}
We also have that when $t\rightarrow+\infty$
\begin{equation}%
\begin{split}
ye^{t}=\frac{\sqrt{2}/2}{e^{t}+1}e^t\longrightarrow \sqrt{2}/2,\, \, \, \,y^{\prime}e^{t}= &
\frac{-(\sqrt{2}/2)e^{t}}{\left(  e^{t}+1\right)  ^{2}}e^{t}\longrightarrow
-\sqrt{2}/2\text{\ and\ }\\
&  y^{\prime \prime}e^{t}=\frac{\sqrt{2}}{2}\frac{e^{2t}-e^{t}}{\left(  e^{t}+1\right)  ^{3}%
}e^{t}\longrightarrow \sqrt{2}/2
\end{split}
\label{e2t}%
\end{equation}
The Euclidean line containing $f(t)$ and $g(t)$ also intersects $\partial\Omega$ at two points with coordinates
$\left(x(t),y(t)\right) ,x(t)>0$ and $\left(\overline{x}(t),y(t)\right) ,\overline{x}(t)<0.$ For $t$ large enough, $y$ is a function of $x,$ say, $y(t)=K\left(  x(t)\right)  $ for some 1-1 and $C^{2}$ function $K$ and, hence, $x$ is a function of $y,$ namely, $x(t)=K^{-1}\left( y(t)\right)  .$ Similarly, for $t$ large enough, $y(t)=\overline{K}\left(  \overline{x}(t)\right)  $ for some 1-1 and $C^{2}$ function $\overline{K}$ and $\overline{x}$ is a function of $y,\overline{x}(t)=\overline{K}^{-1}\left( y(t)\right)  .$\\
The Hilbert distance of $f(t),g(t)$ is given by
\begin{equation}
h\left(  f\left(  t\right)  ,g\left(  t\right)  \right)  =\log \frac
{x(t)+y(t)}{x(t)-y(t)}+\log \frac{\left|\overline{x}(t)\right|+y(t)}{\left|\overline{x}(t)\right| -y(t)}%
\label{hdist}%
\end{equation}
and we denote by $\phi (t)$ and $\overline{\phi} (t)$ the first and second summand respectively. We re-write the above mentioned Cases using the notation just introduced:\\[4mm]
\hspace*{2cm}\begin{minipage}{12cm}\begin{enumerate}
 \item[Case 1:] 
 Both $x(t), \overline{x}(t)\longrightarrow 0$ as $t\rightarrow +\infty .$
  
 \item[Case 2:] $x(t) \rightarrow \lambda >0$ and $ \overline{x}(t)\longrightarrow 0$ as $t\rightarrow +\infty .$
 
 \item[Case 3:]  $x(t) \rightarrow0$ and $ \overline{x}(t)\longrightarrow  \lambda^{\prime}<0$ as $t\rightarrow +\infty .$
\end{enumerate} \end{minipage}\\[4mm]
It suffices to deal only with the convexity of $\phi (t)$ in all three cases:
in Case 1 the proof for the convexity of $\overline{\phi}$ is identical with that of $\phi$ and convexity of $\overline{\phi}$ in Case 2 (resp. Case 3) follows from convexity of $\phi$ in Case 3 (resp. Case 2).

We will suppress the parameter $t$ and we will be writing $\frac
{dy}{dx}$ instead of $\frac{dK}{dx}$ and $\frac{dx}{dy}$ instead of
$\frac{d\left(K^{-1}\right)}{dy}.$ By the following calculation 
\[
 \begin{split} 
 \frac{d^{2}x}{dy^{2}} =  \frac{d}{dy}\left(  \frac{dx}{dy}\right)
 = \frac{d}{dy}\left(  \frac{1}{\left(  \frac{dy}{dx}\right)}\right)
 = -\frac{1}{\left(  \frac{dy}{dx}\right) ^2} \frac{d^{2}y}{dx^{2}}\frac{dx}{dy}
 =- \frac{d^{2}y}{dx^{2}} \left(  \frac{dy}{dx}\right) ^3
 \end{split}
\]
we have the formula 
\begin{equation}
\frac{d^{2}x}{dy^{2}}=-\frac{d^{2}y}{dx^{2}}\left(  \frac{dx}{dy}\right)
^{3}\label{fir}%
\end{equation}
First and second derivatives of $\phi(t)$ are as follows:
\[
\phi^{\prime}(t)=2 \left(  \frac{y}{x}\right)  ^{\prime}
           \frac{1}{1-\left(\frac{y}{x}\right)^2}
\text{\  \ and\  \ }\phi^{\prime \prime}(t)=2\left(  \frac{y}{x}\right)
^{\prime \prime}\frac{1}{1-\left(\frac{y}{x}\right)^2}+
2 \left(\frac{y}{x}\right)^{\prime} 
\frac{2\frac{y}{x}\left(  \frac{y}{x}\right)  ^{\prime}}{\left(1-\left(\frac{y}{x}\right)^2 \right)^2}.
\]
As the slope of the geodesic line $f$ is $1,$ it suffices to show that for $t$
large enough $\left(  \frac{y}{x}\right)  ^{\prime \prime}>0.$  We have the
following calculations
\begin{subequations}
\begin{align}
\left( \frac{y}{x}\right)  ^{\prime} =
  \frac{y^{\prime}x-yx^{\prime}}{x^{2}}=
\frac{y^{\prime}x-\frac{dx}{dy}
y^{\prime}y}{x^{2}}\,\,\,=\,\,\,\frac{x-\frac{dx}{dy}y}{x^{2}} y^{\prime}
\nonumber \\[3mm]
\left(  \frac{y}{x}\right)  ^{\prime \prime}\! \!x^{2}\frac{dy}{dx}=
\left[-\frac{dy}{dx}\frac{d^{2}x}{dy^{2}}y-2+2\frac{y}{x}\frac{dx}{dy}\right]
\left(y^{\prime}\right)^{2}+\left(  x\frac{dy}{dx}-y\right)y^{\prime \prime}
\label{first} \\[3mm]
\left(  \frac{y}{x}\right)  ^{\prime \prime}\! \!x^{2}\frac{dy}{dx}
  \stackrel{by\ (\ref{fir})}{=}  
 \left[ \left(\frac{dy}{dx}\right)^{-2}\frac{d^{2}y}{dx^{2}}y-2+2\frac{y}{x}\frac{dx}{dy}\right]
\left(y^{\prime}\right)^{2}+\left(  x\frac{dy}{dx}-y\right)y^{\prime \prime}
\label{second}
\end{align}
\end{subequations}
\underline{Case 1}: In this case the $x-$axis is the tangent line to $\partial \Omega$ at $\xi = (0,0)$ and using Lemma \ref{curvat} and our curvature hypothesis we have that
\[
\left.  \frac{d^{2}y}{dx^{2}}\right \vert _{x=0}\neq0 \text{\ \ and\ \ }
\left.  \frac{dy}{dx}\right \vert _{x=0}=0.
\]
Multiplying both sides of equation (\ref{second}) by $e^{2t}$ we have
\begin{equation}
\left(  \frac{y}{x}\right)  ^{\prime \prime}\! \!x^{2}\frac{dy}{dx}e^{2t}
              =
\left[  \vphantom{\left(\frac{dy}{dx}\right)^{-2}\frac{d^2 x}{dy^2}} \right.
\underbrace{\left(\frac{dy}{dx}\right)^{-2}\frac{d^{2}y}{dx^{2}}y}_{\underline{\text{term1}}}
                     -2+2
\underbrace{\frac{y}{x}\frac{dx}{dy}}_{\underline{\text{term2}}}\left.
\vphantom{\left(\frac{dy}{dx}\right)^{-2} \frac{d^2 x}{dy^2}y}\right] 
              \left(y^{\prime}\right)^{2}e^{2t}+
\left( \vphantom{ x\frac{dy}{dx}e^t -ye^t}\right.
\underbrace{x\frac{dy}{dx}e^{t}}_{\underline{\text{term3}}}-ye^{t}\left.
\vphantom{ x\frac{dy}{dx}e^t -ye^t}\right)  y^{\prime \prime}e^{t}%
.\label{basiceq}%
\end{equation}
We will show that {\underline{\text{term1}}} and {\underline{\text{term2}}}
both converge to $1/2$ as $t\rightarrow+\infty$ and {\underline{\text{term3}}}
converges to $2.$ Then using (\ref{e2t}) the right hand side of (\ref{basiceq}) converges to 
$\left(  \frac{1}{2}-2+2\frac{1}{2}\right)\left(-\sqrt{2}/2\right)^{2} +
\left(\sqrt{2}-\sqrt{2}/2\right)\left(\sqrt{2}/2\right)=\frac{1}{4}>0.$ 
This shows that $\left(  \frac{y}{x}\right)^{\prime \prime}>0$ 
which in turn implies that $\phi(t)$ is
convex for large enough $t.$ In the following calculations limits are always
taken as $t\rightarrow+\infty$ or, equivalently, $x,y\rightarrow0$ and the
symbol $\sim$ between two functions indicates that the their limits as
$t\rightarrow+\infty$ are equal.
\[
{\underline{\text{term1}}}=
\frac{d^{2}y}{dx^{2}}\frac{y}{\left(  \frac{dy}{dx}\right)^{2}}
\sim 
\frac{d^{2}y}{dx^{2}}\frac{y^{\prime}}{2\frac{dy}{dx}\left( \frac{dy}{dx}\right)^{\prime}}
=
\frac{d^{2}y}{dx^{2}}\frac{y^{\prime}}{2\frac{dy}{dx}\frac{d^{2}y}{dx^{2}}\frac{dx}{dy}y^{\prime}} 
\longrightarrow \frac{1}{2}.
\]
The more tedious calculation for {\underline{\text{term2}}} is as follows:
\begin{equation}
\begin{split}
 {\underline{\text{term2}}}=\frac{y}{x}\frac{dx}{dy}
\sim &
\frac{\left(\frac{y}{x}\right)^{\prime}}{\left(\frac{dy}{dx}\right)^{\prime}} 
=
\frac{\left(x-y\frac{dx}{dy}\right)y^{\prime}x^{-2}}{\frac{d^{2}y}{dx^{2}}\frac{dx}{dy}y^{\prime}}
=
\left(\frac{d^{2}y}{dx^{2}}\right)^{-1}\frac{x\frac{dy}{dx}-y}{x^{2}}
\\ & 
\begin{split}
\sim 
\left(\frac{d^{2}y}{dx^{2}}\right)^{-1} & \frac{\left(x\frac{dy}{dx}-y\right)  ^{\prime}}{\left(  x^{2}\right) ^{\prime}}
=\left(  \frac{d^{2}y}{dx^{2}}\right)  ^{-1}\frac{
\frac{dx}{dy}y^{\prime}\frac{dy}{dx}+x\frac{d^{2}%
y}{dx^{2}}\frac{dx}{dy}y^{\prime}-y^{\prime}}{2x\frac{dx}{dy}y^{\prime}%
} \\ & =
\left(  \frac{d^{2}y}{dx^{2}}\right)  ^{-1}\frac{1}{2}\frac{d^{2}y}{dx^{2}%
}=\frac{1}{2}.\end{split}
\end{split}
\end{equation}
For the calculation of {\underline{\text{term3}}} first observe that
\begin{equation}
\frac{d^{2}y}{dx^{2}}x\frac{dx}{dy}\sim \frac{d^{2}y}{dx^{2}}\frac{x^{\prime}%
}{\left(  \frac{dy}{dx}\right)  ^{\prime}}=\frac{d^{2}y}{dx^{2}}\frac
{\frac{dx}{dy}y^{\prime}}{\frac{d^{2}y}{dx^{2}}\frac{dx}{dy}y^{\prime}%
}=1.\label{help3}%
\end{equation}
Then we have
\[%
\begin{split}
{\underline{\text{term3}}}=x\frac{dx}{dy}e^{t}\sim &  \frac{\left(  x\frac
{dx}{dy}\right)  ^{\prime}}{\left(  e^{-t}\right)  ^{\prime}}=\frac{\frac
{dx}{dy}y^{\prime}\frac{dy}{dx}+x\frac{d^{2}y}{dx^{2}}\frac{dx}{dy}y^{\prime}%
}{-e^{-t}}\\
&  =-y^{\prime}e^{t}\left(  1+x\frac{d^{2}y}{dx^{2}}\frac{dx}{dy}\right)
\overset{by\ (\ref{e2t}),(\ref{help3})}{\longrightarrow}
-\left(-\sqrt{2}/2\right)(1+1)=\sqrt{2}.
\end{split}
\]
This completes the proof for the convexity of $\phi$ in Case 1.\\[3mm]
\underline{Case 2}: In this case the $x(t)\rightarrow \lambda>0$ as 
$t\rightarrow +\infty$ and we have two sub-cases depending on whether 
\begin{equation}
\left.  \frac{dx}{dy}\right \vert _{y=0}=0 \text{\ equivalently\ }
\left.  \frac{dy}{dx}\right \vert _{x=\lambda}=\infty\tag{Subcase 2a}
\label{s2a}
\end{equation}
or\begin{equation}
   \left.  \frac{dx}{dy}\right \vert _{y=0}\neq 0 \neq 
   \left.  \frac{dy}{dx}\right \vert _{x=\lambda}\tag{Subcase 2b}\label{s2b}
  \end{equation}
For \ref{s2a}, multiply both sides of equation (\ref{second}) by 
$e^{t}\frac{dx}{dy}$ to get
\begin{equation}
\left(  \frac{y}{x}\right)  ^{\prime \prime}\! \!x^{2}e^{t}
              =
\left[  
\left(\frac{dy}{dx}\right)^{-3}\frac{d^{2}y}{dx^{2}}y
-2 \frac{dx}{dy}+2\frac{y}{x}\left(\frac{dx}{dy} \right)^2
\right]
              \left(y^{\prime}\right)^{2}e^{t}
              +
\left( x-y\frac{dx}{dy} \right) y^{\prime \prime}e^{t}
.\label{basiceq2}
\end{equation}
By (\ref{e2t}),  $y^{\prime \prime}e^{t} \rightarrow \sqrt{2}/2$ and by assumptions in this Subcase, $\left( x-y\frac{dx}{dy} \right) \rightarrow
\lambda .$  Moreover, the quantity inside the square bracket is easily seen to be bounded and, since by (\ref{e2t}) $\left(y^{\prime}\right)^{2}e^{t}
\rightarrow 0 ,$  it follows that 
\[
\left(  \frac{y}{x}\right)  ^{\prime \prime}\! \!x^{2}e^{t} 
\longrightarrow \lambda\frac{\sqrt{2}}{2}
\]
hence, $\left(  \frac{y}{x}\right)  ^{\prime \prime}
$ is positive for large enough $t.$\\[3mm]
For \ref{s2b}, observe that $ \left.  \frac{dy}{dx}\right \vert _{x=\lambda}$
may be negative. For this reason, we multiply both sides of equation (\ref{second}) by $e^{t}$ to get
\begin{equation}
\left(  \frac{y}{x}\right)  ^{\prime \prime}\! \!x^{2}\frac{dy}{dx}e^t
                 =
\left[  
\left(\frac{dy}{dx}\right)^{-2}\frac{d^{2}y}{dx^{2}}y
-2 +2\frac{y}{x}\frac{dx}{dy} 
\right]
              \left(y^{\prime}\right)^{2}e^{t}
              +
\left( x\frac{dy}{dx}-y \right) y^{\prime \prime}e^{t}
.\label{basiceq3}
\end{equation}
In a similar manner as in the previous subcase we obtain  
\[
\left(  \frac{y}{x}\right)  ^{\prime \prime}\! \!x^{2}\frac{dy}{dx}e^{t} 
\longrightarrow \lambda\frac{dy}{dx}\frac{\sqrt{2}}{2}.
\]
This completes the proof of the convexity of $\phi$ in Case 2.\\[3mm]
\underline{Case 3}: In this case $x(t)\rightarrow 0$ as 
$t\rightarrow +\infty$ and $\left.\frac{dy}{dx}\right\vert_{x=0}\in (0,1)$
because the slope of the geodesic line $f$ is $1.$ We have the following preliminary calculations
\begin{figure}[ptb]
\begin{center}
\includegraphics
[scale=1.2]
{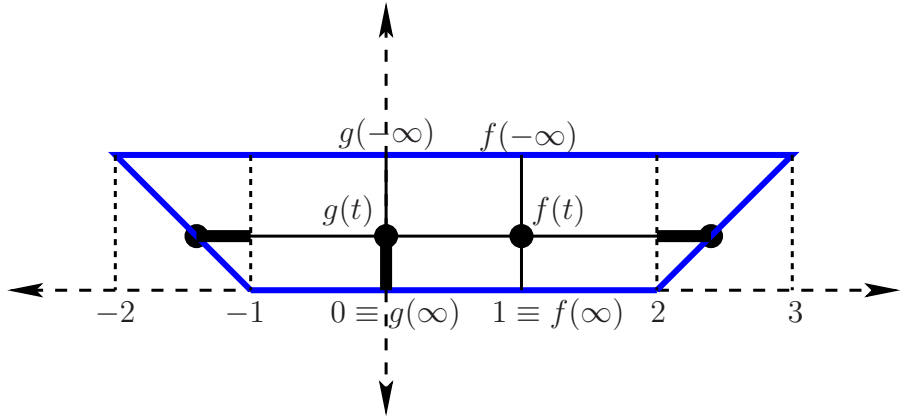}
\end{center}
\par
\begin{picture}(22,12)
\put(215,135){$f(-\infty)$}
\put(162,136){$g(-\infty)$}
\put(235,107){$f(t)$}
\put(156,107){$g(t)$}
\put(119,68){$-1$}
\put(70,68){$-2$}
\put(159,68){$0\equiv g(\infty)$}
\put(220,68){$1\equiv f(\infty)$}
\put(280,68){$2$}
\put(332,68){$3$}
\end{picture}
\caption{The thick segments are equal of Euclidean length $E(t)=\frac{1}%
{e^{t}+1}.$}\label{example1}
\end{figure}
\begin{equation}
 \begin{split}
   \frac{x\frac{dy}{dx}-y}{e^{-2t}}
 \sim &
 \frac{x\frac{d^2y}{dx^2}\frac{dx}{dy}y^{\prime}}{-2e^{-2t}}
=
 -\frac{1}{2}\,\, \frac{d^2y}{dx^2}\,\,\frac{dx}{dy}\,\, 
 \left(e^t x\right) \left(e^t y^{\prime}\right) \\ &
  \sim
  -\frac{1}{2}\,\, \frac{d^2y}{dx^2}\,\,\frac{dx}{dy}\,\, 
 \left(\frac{dx}{dy}\frac{\sqrt{2}}{2}\right) \left(-\frac{\sqrt{2}}{2}\right)
 =\frac{1}{4}\frac{d^2y}{dx^2}\left(\frac{dx}{dy}\right)^2 >0
 \end{split}\label{mple}
\end{equation}
where we used that $e^t x\sim \frac{dx}{dy}\frac{\sqrt{2}}{2}$ and 
$e^ty^{\prime}\rightarrow - \frac{\sqrt{2}}{2} .$
In a similar manner we obtain
\begin{equation}
   \frac{x-\frac{dx}{dy}y}{x^2} \sim 
   -\frac{1}{2} \left(\frac{dy}{dx}\right)^2\frac{d^2x}{dy^2}
   \label{mauro}
\end{equation}
We now multiply both sides of equation (\ref{first}) by $e^{3t}$ to get
\begin{equation}
\left(  \frac{y}{x}\right)  ^{\prime \prime}\! \!x^{2}\frac{dy}{dx}e^{3t}
                 =
\left[  
-\frac{dy}{dx}\frac{d^{2}x}{dy^{2}}ye^t
+2\frac{-1+\frac{y}{x}\frac{dx}{dy}}{e^{-t}} 
\right]
              \left(y^{\prime}\right)^{2}e^{2t}
              +
\frac{x\frac{dy}{dx}-y}{e^{-2t}} y^{\prime \prime}e^{t}
.\label{basiceq4}
\end{equation}
By (\ref{mple}) and the fact $\left(y^{\prime}\right)^{2}e^{2t}
\rightarrow \left(\sqrt{2}/2\right)^2,$ it suffices to show that the term in the square bracket converges to $0$ as $t\rightarrow \infty.$ 
For the first summand inside the square bracket we have
\begin{equation}
-\frac{dy}{dx}\frac{d^{2}x}{dy^{2}}ye^t \rightarrow 
  -\left.\frac{dy}{dx}\right\vert_{0}\frac{d^{2}x}{dy^{2}} \frac{\sqrt{2}}{2}
  \label{paralast}
  \end{equation}
For the second summand inside the square bracket we have
\begin{equation}
 \begin{split}
2\frac{-1+\frac{y}{x}\frac{dx}{dy}}{e^{-t}} &
\sim 
 2\frac{\left(  \frac{y}{x}\right)  ^{\prime } \frac{dx}{dy} 
        +\frac{y}{x}\frac{d^{2}x}{dy^{2}}y^{\prime}} {-e^{-t}} 
 =
 -2y^{\prime}e^t \left[
 \frac{y}{x}\frac{d^{2}x}{dy^{2}} +
   \frac{x-\frac{dx}{dy}y}{x^2}\frac{dx}{dy}
 \right]\\&
 \stackrel{by\ (\ref{mauro})}{\sim} 
 -2y^{\prime}e^t
 \left[
    \frac{y}{x}\frac{d^{2}x}{dy^{2}} -\frac{1}{2} \left(\frac{dy}{dx}\right)^2\frac{d^2x}{dy^2} \frac{dx}{dy}
 \right] \rightarrow \sqrt{2}
 \left[ \frac{1}{2} \left.\frac{dy}{dx}\right\vert_{0} \frac{d^2x}{dy^2}\right]
 \end{split}\label{last}
\end{equation}
By (\ref{paralast}) and (\ref{last}) the term in the square bracket on the right hand side of (\ref{basiceq4}) converges to $0$ as required. This completes the proof in Case 3 and the proof of Theorem 1(b). \hfill  \rule{0.5em}{0.5em}
\begin{remark}
 The convexity result posited in Theorem \ref{mainth} also holds for bounded convex domains $\Omega$ with piece wise $C^2$ boundary which consists of either segments or, $C^2$ curves with nno-vanishing curvature. This follows by combining parts (a) and (b) of Theorem \ref{mainth} and the fact that the distance funtion studied in the above proof  has two summands  each of which was treated separetely and shown to be convex.
 \end{remark}
\section{Examples}

We will construct an example which demonstrates the necessity of the curvature condition 
in Theorem \ref{mainth}b. 
Apart from asymptotic geodesics, there are two more cases: intersecting and
disjoint geodesics. In the case $\Omega$ is a convex polytope, we provide
below examples showing that convexity does not hold neither for intersecting
nor for disjoint geodesics.

\textbf{Example 1 (disjoint geodesics):} let $\Omega$ be the interior of the
trapezoid with vertices $\left(  2,0\right)  ,\left(  3,1\right)  ,\left(
-2,1\right)  $ and $\left(  -1,0\right)  .$ Let $f$ be the geodesic whose
image is the intersection of $\Omega$ with the line $x=1$ and $f\left(
0\right)  =\left(  1,\frac{1}{2}\right)  , $ having arc length
parametrization. Similarly, let $g$ be the geodesic with image the
intersection of $\Omega$ with the line $x=0$ and $g\left(  0\right)  =\left(
0,\frac{1}{2}\right)  ,$ see Figure \ref{example1}. By Lemma \ref{xift}, the Euclidean distance
$\left \vert g\left(  t\right)  -g\left(  +\infty \right)  \right \vert $ is
$\frac{1}{e^{t}+1}\equiv E\left(  t\right)  ,$ in other words, the horizontal
lines $y=\frac{1}{e^{t}+1},t\in \left(  0,\infty \right)  $ intersect the images
of the geodesics $f$ and $g$ at the points $f\left(  t\right)  $ and $g\left(
t\right)  $ respectively. The distance function is given by
\[
h\left(  f\left(  t\right)  ,g\left(  t\right)  \right)  =\log \frac{2+E\left(
t\right)  }{1+E\left(  t\right)  }+\log \frac{2+E\left(  t\right)  }{1+E\left(
t\right)  }.
\]
An elementary calculation shows that
\[
h^{\prime \prime}=2\frac{\Phi}{\left[  2+E\left(  t\right)  \right]
^{2}\left[  1+E\left(  t\right)  \right]  ^{2}}%
\]
where the dominant summand of $\Phi$ is
\[
-2E^{\prime \prime}\left(  t\right)  =-2\frac{e^{2t}-e^{t}}{\left(
e^{t}+1\right)  ^{3}}.
\]
Thus, for large enough $t,$ the function $h\left(  f\left(  t\right)
,g\left(  t\right)  \right)  $ is not convex.\\[2mm]\begin{figure}[ptb]
\begin{center}
\includegraphics
[scale=1.2]
{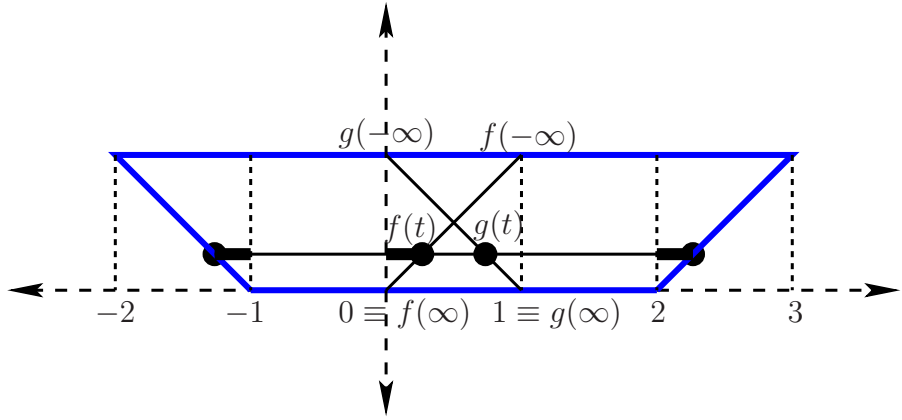}
\end{center}
\par
\begin{picture}(22,12)
\put(215,135){$f(-\infty)$}
\put(162,136){$g(-\infty)$}
\put(213,101){$g(t)$}
\put(179,100){$f(t)$}
\put(119,68){$-1$}
\put(70,68){$-2$}
\put(162,68){$0\equiv f(\infty)$}
\put(220,68){$1\equiv g(\infty)$}
\put(280,68){$2$}
\put(332,68){$3$}
\end{picture}
\caption{The thick segments are equal of Euclidean length $\frac{\sqrt{2}}%
{2}\left \vert f\left(  t\right)  - f\left(  +\infty \right)  \right \vert
=\frac{\sqrt{2}}{2}\frac{\sqrt{2}}{e^{t}+1}=E(t).$}\label{example2}
\end{figure}
\textbf{Example 2 (intersecting geodesics):} let $\Omega$ be as above.
Let $f,g$ be the geodesics with $\operatorname{Im}f=\Omega \cap \left \{
y=x\right \}  $ and $\operatorname{Im}g=\Omega \cap \left \{  y=-x+1\right \}  $
respectively and $f\left(  0\right)  =g\left(  0\right)  =$ $\left(  \frac
{1}{2},\frac{1}{2}\right)  ,$ see Figure \ref{example2}. By the same procedure as in the previous example
we obtain
\[
h\left(  f\left(  t\right)  ,g\left(  t\right)  \right)  =\log \frac
{2}{1+2E\left(  t\right)  }+\log \frac{2}{1+2E\left(  t\right)  }.
\]
An analogous elementary calculations shows that
\[
h^{\prime \prime}=2\frac{\Phi}{\left[  1+2E\left(  t\right)  \right]  ^{2}}%
\]
where the dominant summand of $\Phi$ is $-E^{\prime \prime}\left(  t\right)
\frac{e^{2t}-e^{t}}{\left(  e^{t}+1\right)  ^{3}}.$\\[2mm]
\begin{figure}[ptb]
\begin{center}
\includegraphics
[scale=1.2]
{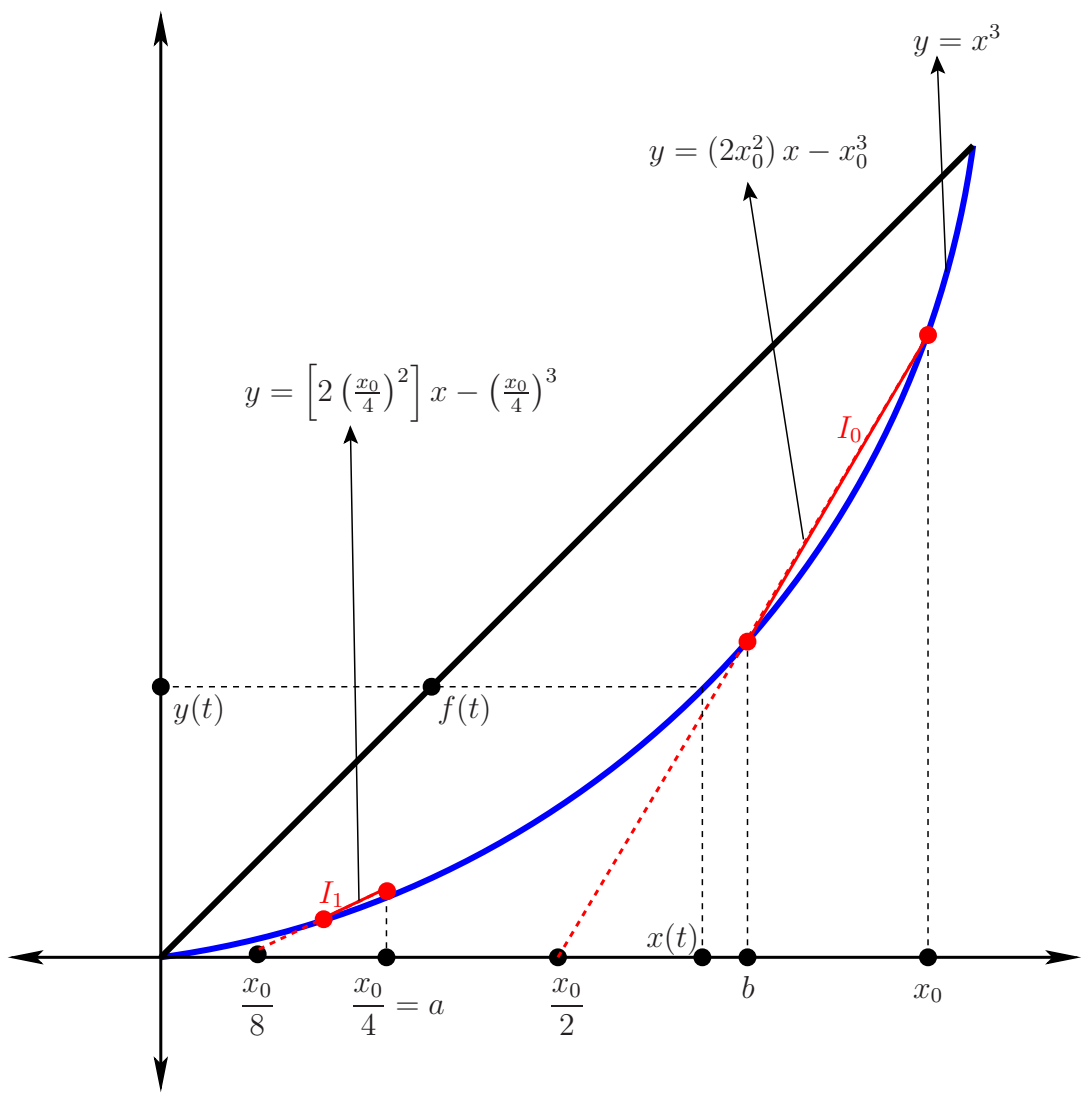}
\end{center}
\par
\begin{picture}(22,12)
\put(165,174){$f(t)$}
\put(65,174){$y(t)$}
\put(244,87){$x(t)$}
\put(132,62){$\displaystyle\frac{x_0}{4}=a$}
\put(90,62){$\displaystyle\frac{x_0}{8}$}
\put(207,62){$\displaystyle\frac{x_0}{2}$}
\put(280,68){$b$}
\put(345,68){$x_0$}
\put(345,427){$y=x^3$}
\put(245,385){$y=\left(2x_0^2 \right) x - x_0^3$}
\put(92,294){$y=\left[2\left(\frac{x_0}{4}\right)^2 \right] x - 
                               \left(\frac{x_0}{4}\right)^3$}
\put(316,280){$\color{red} I_0$}        
\put(120,104){$\color{red} I_1$}                         
\end{picture}
\caption{The segments $I_n$ for $n=0,1 .$}\label{segments01}
\end{figure}

\textbf{Example 3:} We will construct a (convex) $C^2$ curve whose curvature is zero at exactly one point, positive at every other point and two  geodesics asymptotic at the point of zero curvature such that their distance function is not convex.

Consider the convex domain $\Omega$ bounded below by the function
\[ y=\left| x^3 \right| , x\in[-1,1] .\]
Let $f$ (resp. $g$) be the geodesic line whose image is the intersection of $\Omega$ with the line $y=x$ (resp. $y=-x$). We may assume that $\Omega$ is a bounded convex domain containing the above mentioned geodesics. Although Theorem \ref{mainth}b does not apply because 
$\partial \Omega$ has curvature $0$ at the point $(0,0),$ it can be shown that the distance function 
\[ 
D(t):=d\left( f(t),g(t) \right) = 2 \log \frac{x(t)+y(t)}{x(t)-y(t)}
\]
is in fact convex for sufficiently large $t.$ We will alter $\Omega$ by replacing a subarc of its boundary by a segment so that the distance function will no longer be convex at the corresponding time interval. Then we will repeat the same process infinitely many times to ensure that convexity does not hold for large $t$ and we will take appropriate care for the $C^2$ property. \\
By symmetry, we will restrict our attention to $x(t)>0 .$  
\begin{lemma}
 Let $y(t) = \displaystyle \frac{c}{e^t +1}, c>0$ and $y(t) = A x(t) +B$ with 
 $A\in (0,1)$ and $ B<0.$  Then the first and second derivative of  
 $\displaystyle D (t) = 2\log \frac{x(t)+y(t)}{x(t)-y(t)}$ are as follows:
 \[
  D^{\prime} (t) = 4\,\, \frac{Ax-y}{x^2 -y^2}\,\,\frac{1}{A}\,\left( \frac{y^2}{c}-y \right)
 \]
\[
  D^{\prime\prime} (t)\,\, \mathcal{C} = 
    2\left( Ay-x\right) \left( -\frac{y^2}{c} +y \right) + 
                   \left( x^2 -y^2 \right) \left( -\frac{2yA}{c} +A \right)
\]
where $\mathcal{C}$ is the following positive real number 
$\displaystyle \frac{A^2\left( x^2 -y^2 \right)^2}
                    {-4B\left( -\frac{y^2}{c} +y \right)}  .$
\end{lemma}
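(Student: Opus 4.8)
The proof is a direct differentiation built on two consequences of the hypotheses. First, from $y(t)=c/(e^{t}+1)$ we have $e^{t}+1=c/y$, and differentiating gives the autonomous relation $y'=\tfrac{y^{2}}{c}-y$; differentiating once more, $y''=\bigl(\tfrac{2y}{c}-1\bigr)y'$. Note that for $t$ large $0<y<c$, so $y'<0$ and $-\tfrac{y^{2}}{c}+y=y\bigl(1-\tfrac{y}{c}\bigr)>0$. Second, from the linear relation $y=Ax+B$ we get $x'=y'/A$ and, crucially, $Ax-y=-B$ is a positive constant. These are the only facts about $x(t),y(t)$ used below.

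For $D'$, write $D=2\log(x+y)-2\log(x-y)$, so $D'=2\tfrac{x'+y'}{x+y}-2\tfrac{x'-y'}{x-y}$; over the common denominator $x^{2}-y^{2}$ the numerator collapses to $4(y'x-x'y)$. Substituting $x'=y'/A$ turns $y'x-x'y$ into $\tfrac{y'}{A}(Ax-y)$, and replacing $y'$ by $\tfrac{y^{2}}{c}-y$ gives exactly $D'=4\,\tfrac{Ax-y}{x^{2}-y^{2}}\,\tfrac{1}{A}\bigl(\tfrac{y^{2}}{c}-y\bigr)$.

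For $D''$ I would first rewrite $D'=\tfrac{-4B}{A}\cdot\tfrac{y'}{x^{2}-y^{2}}$ using $Ax-y=-B$, then apply the quotient rule: $D''=\tfrac{-4B}{A}\cdot\tfrac{y''(x^{2}-y^{2})-2y'(xx'-yy')}{(x^{2}-y^{2})^{2}}$. Using $x'=y'/A$ one finds $xx'-yy'=-\tfrac{y'}{A}(Ay-x)$, and using $y''=\bigl(\tfrac{2y}{c}-1\bigr)y'$ the numerator becomes $y'\bigl[\bigl(\tfrac{2y}{c}-1\bigr)(x^{2}-y^{2})+\tfrac{2y'}{A}(Ay-x)\bigr]$. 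Multiplying by $\mathcal C=\tfrac{A^{2}(x^{2}-y^{2})^{2}}{-4B(-\tfrac{y^{2}}{c}+y)}$ cancels the surviving $y'$ against $-y'=-(\tfrac{y^{2}}{c}-y)$ in the denominator of $\mathcal C$, cancels the remaining $(x^{2}-y^{2})^{2}$ and $-4B$, and leaves $-A$ times the bracket; distributing $-A$ and using $-2y'=2(-\tfrac{y^{2}}{c}+y)$ yields precisely $D''\,\mathcal C=2(Ay-x)\bigl(-\tfrac{y^{2}}{c}+y\bigr)+(x^{2}-y^{2})\bigl(-\tfrac{2yA}{c}+A\bigr)$. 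Finally $\mathcal C>0$: the numerator $A^{2}(x^{2}-y^{2})^{2}$ is positive (for $t$ large $x>y>0$, and indeed $x-y>0$ is needed for $D$ to be defined), while $-4B>0$ since $B<0$ and $-\tfrac{y^{2}}{c}+y>0$ for $t$ large by the first paragraph.

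The argument is mechanical, so the main obstacle is bookkeeping: one must keep the two linear combinations straight — $Ax-y$ is the constant $-B$ and is what compactifies $D'$, while the $Ay-x$ appearing in $D''$ is genuinely time-dependent — and one must spot the cancellation $-\tfrac{y^{2}}{c}+y=-y'$ that makes the explicit factor in $\mathcal C$ absorb the $y'$ produced by differentiation.
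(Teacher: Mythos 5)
Your computation is correct, and it is the straightforward verification the authors evidently had in mind: the paper states this lemma without proof, and your two key identities ($y'=\tfrac{y^{2}}{c}-y$ from the explicit form of $y(t)$, and $Ax-y=-B$ constant from the linear relation) are exactly what make the stated formulas for $D'$ and $D''\,\mathcal{C}$ drop out of direct differentiation. Nothing further is needed.
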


For $x_0 \in (0,1) $ consider the line determined by the points 
$\left( x_0 , x_0^3 \right)$ and $\left( \frac{x_0}{2} ,0 \right)$ whose equation is 
$y=\left(2x_0^2 \right) x - x_0^3 . $ This line determines a segment $I_0$ with endpoints
$\left(  x_0 , x_0^3\right) $ and $\left(  b,b^3\right)  $ for some $b<x_0$ which can be computed explicitly (see Figure \ref{segments01}). Using the previous Lemma
it is easy to see that for $t_0$ such that $x\left( t_0\right) = x_0$  we have $A=2x_0^2 ,$
$y(t_0)=x_0^3$ and, thus,
\[
  D^{\prime\prime} (t_0)\, \mathcal{C} = -\frac{2}{c}x_0^7 +2x_0^8
\]
which is negative for sufficiently small $x_0 .$
Clearly, if the subarc of $\partial \Omega$ determined by the points 
$\left( b , b^3 \right)$ and $\left( x_0 , x_0^3 \right)$
is replaced by the segment $I_0$ then $D(t)$ will not be convex near $t_0 .$
The same non-convexity property can be obtained by replacing the above 
mentioned subarc of  $\partial \Omega$ by a $C^2$ arc
\[
 \sigma_1 :[b,x_0] \longrightarrow \left[ b^3 ,x_0^3 \right]
\]
of constant and sufficiently small curvature. \\
Using $ x_n = \displaystyle \frac{x_0}{2^{2n}} , n\in \mathbb{N}$ as starting point
we obtain the corresponding intervals $I_n$ with endpoints on $\partial\Omega$ and we perform the same replacement for all $n\in \mathbb{N} $ using $C^2$ arcs $ \sigma_n $
of constant and sufficiently small curvature. Moreover, we may arrange so that the curvature of 
each $ \sigma_n $ $\rightarrow 0$ as $n\rightarrow \infty .$\\
This guarantees that the distance function $D(t)$ with respect to the new (altered) convex domain, denoted again 
by $\Omega ,$ cannot be convex for $t$ large enough.\\
The endpoint $\left(  b,b^3\right)$ of the interval $I_0$ can be computed explicitly 
but we will only need the fact that 
\begin{equation}
  b\in \left( \frac{x_0}{2} , \frac{3x_0}{4}\right) . \label{betamiddle}\end{equation}
Denote by $a$ the point $x_1 = \displaystyle \frac{x_0}{4} .$ \\
Our Final Step is to replace the subarc of $\partial\Omega$ with endpoints  
$\left( a , a^3 \right)$ and $\left( b , b^3 \right)$ by a $C^2$ curve 
\[ \sigma_{1,2} : [a,b]\longrightarrow \left[ a^3 ,b^3 \right] \]
so that  the first and second derivatives of $\sigma_{1,2}$ matches those 
of $\sigma_1$ and $\sigma_2 $ at the appropriate points.
\begin{lemma}\label{lemmaintineq}
 Let $[\alpha,\beta] $ be an interval. Let $\alpha^{(0)} , \alpha^{(1)} , \alpha^{(2)}$ and 
 $\beta^{(0)} , \beta^{(1)} ,\beta^{(2)}$ be positive
 real numbers satisfying $  0<\alpha^{(0)}<\beta^{(0)} ,   0<\alpha^{(1)}<\beta^{(1)} $ and 
 \begin{equation}
    \alpha^{(1)}(\beta-\alpha )< \beta^{(0)} -\alpha^{(0)} < \beta^{(1)} (\beta-\alpha) \label{intineq}
 \end{equation}    
Then there exists a $C^2$ function 
$\sigma : [\alpha,\beta]\longrightarrow\left[ \alpha^{(0)} ,\beta^{(0)} \right]$
satisfying
\begin{itemize}
  \item $\sigma (\alpha)=\alpha^{(0)} ,\sigma^{\prime} (\alpha) =\alpha^{(1)} , 
  \sigma^{\prime\prime} (\alpha)=\alpha^{(2)} $
  \item $\sigma (\beta)=\beta^{(0)} ,\sigma^{\prime} (\beta) =\beta^{(1)} , 
  \sigma^{\prime\prime} (\beta) =\beta^{(2)}$
   \item $\sigma^{\prime\prime} (x)>0$ for all $x\in(\alpha,\beta ).$
\end{itemize}
\end{lemma}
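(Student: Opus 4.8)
The plan is to reduce the six pointwise boundary conditions to two integral (moment) constraints on the prospective second derivative, and then to build that second derivative by hand. Put $\psi := \sigma''$ and recover $\sigma$ by integrating twice from the left endpoint:
\[
\sigma(x) = \alpha^{(0)} + \alpha^{(1)}(x-\alpha) + \int_\alpha^x (x-s)\,\psi(s)\,ds ,
\qquad
\sigma'(x) = \alpha^{(1)} + \int_\alpha^x \psi(s)\,ds .
\]
With this Ansatz the three conditions at $\alpha$ hold automatically provided $\psi$ is continuous with $\psi(\alpha)=\alpha^{(2)}$, while the three conditions at $\beta$ reduce to $\psi(\beta)=\beta^{(2)}$ together with
\[
\int_\alpha^\beta \psi(s)\,ds = \beta^{(1)}-\alpha^{(1)} =: I ,
\qquad
\int_\alpha^\beta (\beta-s)\,\psi(s)\,ds = \beta^{(0)}-\alpha^{(0)}-\alpha^{(1)}(\beta-\alpha) =: M .
\]
Thus the lemma follows once we produce a \emph{positive} continuous $\psi$ on $[\alpha,\beta]$ with the prescribed endpoint values and these two prescribed integrals: positivity of $\psi$ gives $\sigma''>0$ on $(\alpha,\beta)$; positivity of $\sigma'(\alpha)=\alpha^{(1)}$ together with $\sigma''>0$ makes $\sigma$ increasing, hence it maps $[\alpha,\beta]$ onto $[\alpha^{(0)},\beta^{(0)}]$; and $\sigma\in C^2$ because $\sigma''=\psi$ is continuous.

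Next I would record what the hypotheses give. Since $\alpha^{(1)}<\beta^{(1)}$ we have $I>0$, and the double inequality (\ref{intineq}) is \emph{exactly} the statement that $0<M<I\,(\beta-\alpha)$, i.e. that the $(\beta-s)$-weighted mean $M/I$ is an admissible value in the open interval $(0,\beta-\alpha)$. (Conversely, since $\beta-s\in(0,\beta-\alpha)$ on the open interval, this is a necessary condition for a positive $\psi$, so (\ref{intineq}) is not an artefact of the method.)

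Then I would construct $\psi=\psi_0+\psi_1$. Here $\psi_0$ is a fixed positive continuous \emph{endpoint carrier}: it equals $\alpha^{(2)}$ at $\alpha$ and $\beta^{(2)}$ at $\beta$, tapers linearly to a tiny constant away from the endpoints over short intervals, and is chosen so small in $L^1$ that both $\int_\alpha^\beta \psi_0$ and $\int_\alpha^\beta(\beta-s)\psi_0$ fall below any preassigned positive threshold; in particular the residual data $r_I:=I-\int_\alpha^\beta\psi_0$ and $r_M:=M-\int_\alpha^\beta(\beta-s)\psi_0$ still satisfy $0<r_M<r_I(\beta-\alpha)$. The correction $\psi_1\ge 0$ is supported in the open interval and is taken of the form $\nu_1 b_1+\nu_2 b_2$, where $b_1,b_2$ are nonnegative continuous bumps of total mass $1$ concentrated very near $\alpha$ and very near $\beta$ respectively; then $\int(\beta-s)b_1$ is close to $\beta-\alpha$ and $\int(\beta-s)b_2$ is close to $0$, so the linear system $\nu_1+\nu_2=r_I$, $\nu_1\int(\beta-s)b_1+\nu_2\int(\beta-s)b_2=r_M$ has a unique solution, and that solution is \emph{nonnegative} once the bumps are concentrated closely enough, precisely because $r_M/r_I$ lies strictly between the two bump weights. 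Then $\psi=\psi_0+\psi_1$ is continuous, strictly positive (it dominates $\psi_0$), has the correct endpoint values (the bumps vanish at $\alpha,\beta$) and the correct two integrals, and $\sigma$ is read off from the displayed formula.

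The main obstacle is exactly the bookkeeping in this last step: one must first fix the carrier $\psi_0$ small enough, and then choose the bumps $b_1,b_2$ concentrated close enough to the endpoints, so that the coefficients $\nu_1,\nu_2$ come out $\ge 0$ while $\psi_0$ simultaneously keeps $\psi$ strictly positive and pins the endpoint values. Keeping these quantitative choices mutually consistent is where inequality (\ref{intineq}) is used in an essential way; everything else is a routine double integration.
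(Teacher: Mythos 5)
Your argument is correct. It is close in spirit to the paper's proof but uses a different decomposition: the paper constructs the \emph{first} derivative directly, as a strictly increasing differentiable function $\sigma^{(1)}:[\alpha,\beta]\to[\alpha^{(1)},\beta^{(1)}]$ with endpoint slopes $\alpha^{(2)},\beta^{(2)}$, and then imposes the single integral constraint $\int_\alpha^\beta\sigma^{(1)}=\beta^{(0)}-\alpha^{(0)}$, which is feasible precisely because any such $\sigma^{(1)}$ has integral strictly between $\alpha^{(1)}(\beta-\alpha)$ and $\beta^{(1)}(\beta-\alpha)$ --- this is where (\ref{intineq}) enters. You instead prescribe the \emph{second} derivative $\psi$ and reduce the boundary data to two moment constraints, $\int\psi=I$ and $\int(\beta-s)\psi\,ds=M$, observing that (\ref{intineq}) is exactly $0<M<I(\beta-\alpha)$, and then solve by a carrier-plus-two-bumps ansatz. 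Your route costs an extra integration and a $2\times 2$ linear system, but it buys genuine precision: the paper never explains how $\sigma^{(1)}$ is adjusted to hit the exact integral value, and ``strictly increasing differentiable'' only yields $\sigma''\ge 0$ rather than the required $\sigma''>0$ on $(\alpha,\beta)$ --- your $\psi\ge\psi_0>0$ closes that gap explicitly. One small point worth stating when you write this up: after subtracting the carrier, you use not only $0<r_M<r_I(\beta-\alpha)$ but also $r_I>0$, all of which follow from the strictness of (\ref{intineq}) once $\psi_0$ is small enough in $L^1$; you say this, and it is the only place where the order of quantifiers (first the carrier, then the bump concentration) genuinely matters.
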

\begin{proof}
 First, we may find a strictly increasing differentiable function 
 \[\sigma^{(1)} : \left[ \alpha , \beta\right] \longrightarrow 
 \left[ \alpha^{(1)} ,\beta^{(1)}\right] \]
 satisfying 
 \[ \lim_{t\rightarrow \alpha} \frac{d}{dt} \sigma^{(1)}(t) = \alpha^{(2)}
 \textrm{\ \ and\ \ }
 \lim_{t\rightarrow \beta} \frac{d}{dt} \sigma^{(1)}(t) = \beta^{(2)}
 \]
 Set $\sigma (t) = \alpha^{(0)} + \int_{\alpha}^{t}\sigma^{(1)}(s) ds $ and we need the following equality to hold 
 \begin{equation}
\int_{\alpha}^{\beta} \sigma^{(1)} (t) dt = \sigma(\beta) - \sigma(\alpha) =
           \beta^{(0)}  - \alpha^{(0)}    .         \label{pat}
 \end{equation}
As \[ \alpha^{(1)}(\beta -\alpha) < \int_{\alpha}^{\beta} \sigma^{(1)} (t) dt <
      \beta^{(1)}(\beta -\alpha)\]
equation (\ref{pat}) can be achieved provided that (\ref{intineq}) holds.   
\end{proof}

In order to use the above Lemma to find $\sigma_{1,2}$ we need to check that the boundary values of $\sigma_1$ and $\sigma_2$ which correspond to the segments $I_0$ and $I_1 $  satisfy the assumptions of the above Lemma. 

The slope of $I_0$ (resp. $I_1$) is $2x_0^2 $ (resp. $2(x_0/4)^2$) so the first two inequalities
required by Lemma \ref{lemmaintineq} clearly hold:
\[
  0<\left(  \frac{x_0}{4} \right)^3<b^3 \textrm{\ and\ } 0< 2 \left(  \frac{x_0}{4} \right)^2 <
  2x_0^2 .
\]
For condition (\ref{intineq}) of Lemma
\ref{lemmaintineq} we need to check that 
\[
 2 \left(  \frac{x_0}{4} \right)^2 \left( b- \frac{x_0}{4} \right) <
b^3- \left(  \frac{x_0}{4} \right)^3 <  2x_0^2 \left( b-  \frac{x_0}{4} \right).
\]
For the right hand side inequality and using (\ref{betamiddle}) it suffices to check that 
\[
\left(  \frac{3x_0}{4} \right)^3 - \left(  \frac{x_0}{4} \right)^3 < 2x_0^2
\left(\frac{x_0}{2} -  \frac{x_0}{4} \right)
\]
which is equivalent to $\frac{26}{64} x_0^3 < \frac{1}{2}x_0^3$ which holds. For the left
hand side inequality and using again (\ref{betamiddle}) it suffices to check that 
\[
2\left(  \frac{x_0}{4} \right)^2\left(\frac{3x_0}{4} -  \frac{x_0}{4} \right)
< \left(  \frac{x_0}{2} \right)^3 - \left(  \frac{x_0}{4} \right)^3 
\]
which is equivalent to $\frac{x_0^3 }{32} < \frac{7}{64}x_0^3$ which holds.

As we have the liberty to choose the arcs $\sigma_n$ arbitrarily close to the segments $I_n ,$ it follows that the inequalities required in Lemma \ref{lemmaintineq} hold for any pair
of segments $I_n , I_{n+1} .$ Therefor,
the above procedure can be followed in an identical way to construct the curves $\sigma_{n,n+1}$ 
joining the curves $\sigma_n$ $\sigma_{n+1}$ for all $n.$ The final curve obtained in this way 
is clearly a $C^2$ curve with zero curvature only at the point $(0,0) $ and, by construction,
the distance function between the geodesics $f,g$ which are asymptotic at $(0,0)$ is not convex.

\end{document}